\title{About the integrability of the Rapcsák equation}
\author{T. Milkovszki and Z. Muzsnay} \affil{\small Institute of Mathematics,
  University of Debrecen,\authorcr H-4032 Debrecen, Egyetem t\'er 1,
  Hungary\authorcr {\it E-mail}: {\tt
    {}[milkovszki,muzsnay]@science.unideb.hu}\authorcr \hspace{1cm}\authorcr }
\theoremstyle{plain} 
\newtheorem{theorem}{Theorem}[section]
\newtheorem{proposition}[theorem]{Proposition}
\newtheorem{corollary}[theorem]{Corollary}
\newtheorem{lemma}[theorem]{Lemma}
\theoremstyle{definition}
\newtheorem{definition}[theorem]{Definition}
\newtheorem{remark}[theorem]{Remark}
\theoremstyle{remark}
\newcommand{\R}{\mbox{$\mathbb R$}} 
\newcommand{\ts}{\textsuperscript}
\def\TM{\mathcal{T}M}
\def\u#1{\underline{#1}}
\def\Ker{\mathrm{Ker \,}}
\def\Im{\mathrm{Im \,}}
\def\rank{\mathrm{rank \,}}
\def\RTM{\mathbb R_{TM}}
\def\L{\mathcal L}
\def\P{\mathcal P}
\begin{document}

\maketitle

\begin{abstract}
  In \cite{Rapcsak_1961} A.~Rapcs\'ak obtained necessary and sufficient
  conditions for the projective Finsler metrizability in terms of a second
  order partial differential equations.  In this paper we investigate the
  integrability of the Rapcsák system, consisting of the Rapcsák equations and
  the homogeneity condition, by using the Spencer version of the
  Cartan-K\"ahler theorem. We also consider the extended Rapcsák system, where
  the first integrability conditions are included.
\end{abstract}

\bigskip

\begin{description}
\item[2010 MSC class:] 49N45, 58E30, 53C60, 53C22

\item[Keywords:] sprays, projective metrizability, partial differential
  operator, formal integrability.
\end{description}


\section{Introduction}
\label{sec:introduction}

Last year we celebrated the 100\ts{th} years of the birth of András Rapcsák.
He was one of the founder of the Finsler Geometry research school in Debrecen.
Rapcsák's results are still relevant and up to date, as several recent
citations show it.  His most important results concern the projective Finsler
metrizability problem, where one seeks for a Finsler metric whose geodesics
are projectively equivalent to the solutions of a given second order
homogeneous ordinary differential equations (SODE).

The projective Finsler metrizability problem can be considered as a particular
case of the inverse problem of the calculus of variations.  Rapcs\'ak
\cite{Rapcsak_1961} obtained necessary and sufficient conditions for the
projective Finsler metrizability in terms of a second order PDE system, called
now \emph{Rapcsák equations} \cite{CMS_2012, Szilasi_2002, Shen}. The
coordinate free formulations of these equations can be found in \cite{Kl_1968,
  Szilasi_2002}.  Rapcsák's approach is simple and natural: one finds
conditions directly on the Finsler function that one seeks for.  Recently
several new results appeared about the projective Finsler metrizability
problem \cite{BMz_2011, crampin08, CMS_2012, CMS_2013, Matveev_2012, Mestdag}.
We remark that, altough in these papers the Rapcsák equation is usually
mentionned, the new results were obtained by different approaches, for example
by the so-called multiplier method, where one seeks for the existence of a
variational multiplier matrix. In \cite{CMS_2012} the generalized Helmholtz
system was considered and in \cite{BMz_2011} a system in terms of a semi-basic
1-form was investigated.  In this paper, in the perspective of the projective
metrizability problem, we consider the \emph{Rapcsák system}, which consists
of the homogeneity equation \eqref{eq:k-homog} and a second order differential
equation \eqref{eq:0}, called the \emph{Rapcsák equation}.  We investigate the
integrability of the Rapcsák system by using the Spencer version of the
Cartan-K\"ahler theorem.

The structure of the paper is as follows. In Section \ref{sec:prelim} we give
a brief introduction to the Fr\"olicher-Nijenhuis theory and to the canonical
structures on the tangent bundle of a manifold.  We also introduce the main
structures one needs to discuss the geometry of a spray: connection, Jacobi
endomorphism, curvature.  We also recall the basic tool of the Cartan-K\"ahler
theory.

In Section \ref{sec:rapcsak} we use the geometric setting presented in Section
\ref{sec:prelim} to show that the Rapcsák system gives necessary and
sufficient condition for the projective metrizability problem. Alternative
proves can be found in \cite{SzLK_2014, Szilasi_2002}.  We discuss the
integrability of the Rapcsák system by using conditions provided by
Cartan-K\"ahler theorem.  We conclude the chapter by showing that there is
only one obstruction to the formal integrability of $P_1$. This obstruction is
expressed in terms of the nonlinear connection induced by the spray.

In Section \ref{sec:ext_rapcsak} we investigate the formal integrability of
the \emph{extended Rapcsák system} composed by the Rapcsák system and its
integrability conditions found in Section \ref{sec:rapcsak}. We show that the
obstruction to the integrability of this new system can be expressed in terms
of the curvature tensor of the nonlinear connection induced by the spray.  For
some classes of sprays the curvature obstruction is identically satisfied:
flat sprays, isotropic sprays, and sprays on 1- and 2-dimensional manifolds.
For each of these classes the Weyl curvature of the spray is zero. Although,
for some of these classes the projective metrizability problem has been
discussed before by some authors, our approach in this work is different. This
approach gives also the possibility to push forward the computation and
consider sprays with non vanishing Weyl curvature.  We remark however that in
that case the computations are long and complex because the symbol of the
correspondent differential operator may be not involutive. By computing the
appropriate Spencer cohomology groups one can prove that it is not $2$-acyclic
either \cite{MiMz_preprint} which shows that higher order compatibility conditions
arises. The analysis of this new system will be the subject of an other
publication.

\section{Preliminaries}
\label{sec:prelim}

Throughout this paper $M$ will denote an $n$-dimensional smooth
manifold. $C^\infty(M)$ denotes the ring of real-valued smooth functions,
$\mathfrak X(M)$ is the $C^\infty(M)$-module of vector fields on $M$, $\pi:TM
\to M$ is the tangent bundle of $M$, $\TM=TM \! \setminus\! \{0\}$ is the slit
tangent space.  We will essentially work on the manifold $TM$ and on its
tangent space $TTM$.  When there is no danger of confusion, $TTM$ and $T^*TM$
will simply be denoted by $T$ and $T^*$, respectively.  $VTM=\Ker \pi_*$ is
the vertical sub-bundle of $T$. We denote by $\Lambda^k(M)$, $S^k(M)$ and
$\Psi^k(M)$ the $C^\infty (M)$-modules of the skew-symmetric, symmetric and
vector valued $k$-forms respectively.  Similarly, we denote by
$\Lambda^k_v(TM)$ and $\Psi^k_v(TM)$ the $C^\infty (TM)$-modules of semi-basic
$k$-forms and semi-basic vector valued $k$-forms.

The Fr\"olicher-Nijenhuis theory provides a complete description of the
derivations of $\Lambda(M)$ with the help of vector-valued differential forms,
for details we refer to \cite{FN}.  The $i_*$ and the $d_*$ type derivation
associated to a vector valued $l$-form $L$ will be denoted by $i_L$ and
$d_L$. They can be introduce in the following way: if $L\in\Psi^{l}(M)$, then
\begin{align*} 
  i_L\omega(X_1,\dots,X_l)=\omega(L(X_1,\dots,X_l)),
\end{align*} 
where $X_1,\dots,X_l\in\mathfrak{X}(M)$, $\omega \in \Lambda^1(M)$.
Furthermore, $d_L$ is the commutator of the derivations $i_L$ and $d$, that is
\begin{align*} 
  d_L:=[i_L,d]=i_Ld-(-1)^{l-1}di_L.
\end{align*} 
We remark, that for $X\!\in\!\mathfrak{X}(M)$ we have $d_X\!=\!\mathcal{L}_X$
the Lie derivative, and $i_x$ is the substitution operator. The
\textit{Fr\"olicher--Nijenhuis bracket} of $K\in \Psi^{k}(M)$ and $L\in
\Psi^{l}(M)$ is the unique $[K,L]\in \Psi^{k+l}$ form, such that
\begin{align*}
	[d_K,d_L]=d_{[K,L]}.
\end{align*}
In the special case, when $K\in \Psi^{1}(M)$, $X,Y\in\mathfrak{X}(M)$ we have
$[K,X]\in\Psi^{1}(M)$ defined as
\begin{align*}
  [K,X](Y)=[KY,X]-K[Y,X].
\end{align*}

\subsubsection*{Spray and associated geometric quantities}

Let $J\colon TTM\rightarrow TTM$ be the \emph{vertical endomorphism} and $C
\in \mathfrak X(TM)$ the \textit{Liouville vector field}.  In an induced local
coordinate system $(x^i,y^i)$ on $TM$ we have
\begin{math}
  J = dx^i \otimes \frac {\partial}{\partial y^i },
\end{math}
and 
\begin{math}
  C = y^i \frac {\partial}{\partial y^i }.
\end{math}
Euler's theorem for homogeneous functions implies that $L\!\in\!C^\infty(TM)$
is a $1$-ho\-mo\-geneous function in the $y$ variable if and only if
\begin{equation}
  \label{eq:k-homog}
  y^i \frac{\partial L}{\partial y^i}-L = 0.
\end{equation}
The vertical endomorphism satisfies the following properties: $J^{2} = 0$,
$\Ker J \!=\! \Im J\!=\!VTM$ and $[J,C]\!=\!J$.

A \textit{spray} is a vector field $S$ on $\TM$ satisfying the relations $JS =
C$ and $[C,S]=S$. The coordinate representation of a spray $S$ takes the form
\begin{equation*}
  \label{eq:S}
  S = y ^i \frac {\partial}{\partial x^i } +f^i (x,y)
  \frac {\partial}{\partial y^i },
\end{equation*}
where the functions $f^i(x,y)$ are homogeneous of degree 2 in $y$.  The
\textit{geodesics of a spray} are curves $\gamma : I \to M$ such that $S \circ
\dot \gamma = \ddot \gamma$. Locally, they are the solutions of the equations
\begin{equation}
  \label{eq:sode}
  \hphantom{\qquad i=1,...,n} \ddot x^{i} =f^i\left(x, \dot x\right), 
  \qquad  i=1,\dots,n.
\end{equation}
Two sprays $S$ and $\widetilde{S}$ are called \textit{projective equivalent},
if their geodesics coincide up to an orientation preserving
reparametrization. It is not difficult to show that $S$ and $\widetilde{S}$
are projective equivalent if and only if they are related, by the formula:
\begin{equation}
  \label{eq:equiv_spray}
  \widetilde S = S -2\P C ,
\end{equation}
where $\P \in C^{\infty}(\TM)$ is a 1-homogeneous function.

To every spray $S$ a \textit{connection} $\Gamma: = [J,S]$ can be
associated. We have $\Gamma^2 = \mathrm{Id}$. The eigenspace of $\Gamma$
corresponding to the eigenvalue $-1$ is the \emph{vertical space} $VTM$, and
the eigenspace corresponding to +1 is called the \emph{horizontal space}. For
any $x\in TM$ we have
\begin{math}
  T_xTM = H_xTM \oplus V_{x}TM.
\end{math}
The corresponding horizontal and vertical projectors associated to $\Gamma$
are denoted by $h$ and $v$. One has
\begin{equation}
  \label{eq:h_v}
  h= \frac{1}{2}(\textrm{Id} + \Gamma ), \qquad 
  v= \frac{1}{2}(\textrm{Id} - \Gamma ).
\end{equation}
The \textit{curvature} $R = \frac{1}{2}[h,h]$ of the connection is the
Nijenhuis torsion of the horizontal projection $h$. The \textit{Jacobi
  endomorphism} (or Riemann curvature \cite{Shen}) is defined as
$\Phi=i_{S}R$.  The Jacobi endomorphism determines the curvature by the
formula
\begin{math}
  R=\frac{1}{3}[J,\Phi].
\end{math}
The spray $S$ is called \emph{flat} if its Jacobi endomorphism has the form
$\Phi=\lambda J$ and \emph{isotropic}, if $\Phi=\lambda J-\alpha\otimes C$
with some $\lambda \in C^{\infty}(\TM)$, $\alpha\in \Lambda^1_v(\TM)$.

\subsubsection*{Finsler structure}

A \emph{Finsler function} on a manifold $M$ is a continuous function $F\colon
TM \to \R$, which is smooth and positive away from the zero section,
homogeneous of degree 1, and strictly convex on each tangent space. The energy
function $E:TM \to \R$ associated to a Finsler structure $F$ is defined as
$E:=\frac{1}{2}F^2$. The
\begin{displaymath}
  g_{ij} := \frac{\partial^2 E}{\partial y^i \partial y^j}
\end{displaymath}
is positive definite at any points $(x,y)\in \TM$. The pair $(M,F)$ is called
Finsler manifold. The geodesics on the Finsler manifold $(M,F)$ are the
solutions of the Euler-Lagrange equation
\begin{equation}
  \label{eq:E-L_loc}
  \hphantom{\qquad i=1,...,n}
  \frac{d}{dt}\frac{\partial E}{\partial \dot{x}^i }
  - \frac{\partial E}{\partial x^i} = 0, \qquad i=1,\dots,n.
\end{equation}
It is not difficult to see that for any function $E\in C^\infty(\TM)$ the
1-form
\begin{equation}
  \label{eq:E_L}
  \omega_E = i_S dd_JE +d{\mathcal L}_C E -dE,
\end{equation}
is semi-basic, and its coordinate representation takes the form $\omega_E =
\omega_i \, dx^i$ where the coefficients $\omega_i$ are the functions
appearing in the left-hand side of the Euler-Lagrange equation
\eqref{eq:E-L_loc}.  Therefore $S$ corresponds to the geodesic equation of $E$
if and only if the equation
\begin{equation}
  \label{eq:omega_E=0}
  \omega_E = 0
\end{equation}
holds.  The spray $S$ is called \emph{Finsler metrizable}, if there exists a
Finsler function such that for the corresponding energy function
\eqref{eq:omega_E=0} holds, and $S$ is \emph{projective Finsler metrizable},
if it is projective equivalent to a \emph{Finsler metrizable} spray.

\subsubsection*{Formal integrability}

To investigate the integrability of the Rapcsák system we shall use Spencer's
technique of formal integrability in the form explained in \cite{GrMz_2000}.
For a detailed account see \cite{BCGGG}.  We recall here the basic notions in
order to fix the terminology.

Let $B$ be a vector bundle over $M$.  If $s$ is a section of $B$, then
$j_k(s)_x$ will denote the $k$\ts{th} order jet of $s$ at the point $x \in M$.
The bundle of $k$\ts{th} order jets of the sections of $B$ is denoted by
$J_kB$.  In particular $J_k(\R_M)$ will denote the $k$\ts{th} order jets of
real valued functions, that is the sections of the trivial line bundle.  Let
$B_1$ and $B_2$ be vector bundles over $M$ and
\begin{math}
  P\colon Sec\,(B_1) \to Sec \,(B_2)
\end{math}
a differential operator. An $s\in Sec (B_1)$ is a \emph{solution} to $P$ if
$Ps\equiv 0$.

If $P$ is a linear differential operator of order $k$, then a morphism
\begin{math}
  p_k(P) \colon J_{k}(B_1) \to B_2
\end{math}
can be associated to $P$.  The $l$\ts{th} order prolongation $p_{k+l}(P)\colon
J_{k+l}(B_1) \to J_l(B_2)$ can be introduced in a natural way by taking the
$l$\ts{th} order derivatives.  $Sol_{k+l, x}(P):=\mathrm{Ker}\, p_{k+l,x}(P)$
denotes the set of \emph{formal solutions of order $l$ at $x \in M$}. Obviously,
we have
\begin{displaymath}
  Ps \equiv 0 \qquad \Rightarrow \qquad j_{l,x}(s) \in Sol_{l, x}(P),
\end{displaymath}
for every $l\geq k$ and $x\in M$.  The differential operator $P$ is called
\textit{formally integrable} if $Sol_l(P)$ is a vector bundle for all $l \geq
k$, and the restriction $\overline{\pi}_{l,x}\colon Sol_{l+1,x}(P)\rightarrow
Sol_{l,x}(P)$ of the natural projection is onto for every $l\geq k$.  In that
case any $k$\ts{th} order solution or \emph{initial data} can be lifted into
an infinite order solution.  In the analytic case, formal integrability
implies the existence of solutions for arbitrary initial data
(see.~\cite{BCGGG}, p.~397).  To prove the formal integrability, one can use
the Cartan-K\"ahler theorem. To present it, we have to introduce some
notations.

Let $\sigma_{k}(P)$ denote the symbol of $P$ determined by the highest order
terms of the operator. It can be interpreted as a map $\sigma_{k}(P)\colon
S^{k}T^*\!M \otimes B_1 \to B_2$.  $\sigma_{k+l}(P):S^{k+l}T^*\!M \otimes B_1
\to S^lT^*\!M \otimes B_2$ denotes the symbol of the $l$\ts{th} order
prolongation of $P$. If $\mathcal E \!=\! \{e_1 \dots e_n\}$ is a basis of
$T_x M$, we set 
\begin{alignat*}{2}
  &g_{k,x} (P) & &= \Ker \sigma_{k,p}(P),
  \\
  &g_{k,x} (P)_{e_1\dots e_j} && = \bigl\{A\in g_{k,p}(P) \mid i_{e_1}A =
  \dots = i_{e_j}A = 0 \bigl\}, \quad j=1,\dots,n,
\end{alignat*}
The basis $\mathcal E$ is called \emph{quasi-regular} if one has
\begin{displaymath}
  \textrm{dim} \, g_{k+1,x}(P) = \textrm{dim} \, g_{k,x}(P) + \sum
  _{j=1}^{n} \textrm{dim} \, g_{k,x}(P)_{e_1\dots e_j} .
\end{displaymath}
A symbol is called \textit{involutive}\footnote{In the works of Cartan, and
  more generally in the theory of exterior differential systems,
  "involutivity" means more than the existence of a quasi-regular basis and it
  refers to "integrability" (cf.  \cite{BCGGG}, p.107, 140). Here we are
  following the terminology of Goldschmidt (cf.  \cite{BCGGG}, p.\,409).}  if
there exists at any $x\in M$ a quasi-regular basis.  The notion of
involutivity allows us to check the formal integrability in a simple way by
using the following
\begin{theorem}
  \label{thm:cartan_kahler}
  [Cartan-K\"ahler].  Let $P$ be a $k$\ts{th} order linear partial
  differential operator.  Suppose that $P$ is regular, that is $Sol_{k+1}(P)$
  is a vector bundle over $Sol_k(P)$.  If the map $\overline {\pi }_{k} \colon
  Sol_{k+1}(P)\rightarrow Sol_k(P)$ is surjective and the symbol is
  involutive, then $P$ is formally integrable.
\end{theorem}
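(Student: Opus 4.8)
The plan is to establish the two defining properties of formal integrability — that $Sol_l(P)$ is a vector bundle over $M$ and that the natural projection $\overline{\pi}_{l}\colon Sol_{l+1}(P)\to Sol_l(P)$ is onto — for every $l\geq k$, by induction on $l$, the base of the induction being supplied by the regularity and surjectivity hypotheses. The inductive mechanism is to identify the obstruction to lifting a formal solution of order $l$ to order $l+1$ with a class in the degree-$2$ Spencer cohomology of the (prolonged) symbol, and then to use involutivity to make that cohomology vanish at every level.

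First I would fix $x\in M$ and write the commutative diagram whose rows are the exact sequences defining $Sol_{l+1,x}(P)$ and $Sol_{l,x}(P)$ inside the jet bundles, and whose columns involve the prolonged symbol maps $\sigma_{l+1}(P)$, $\sigma_l(P)$ together with the Koszul map $\delta$. A snake-lemma type chase through this diagram produces an exact sequence
\[
0 \longrightarrow g_{l+1,x}(P) \longrightarrow Sol_{l+1,x}(P) \xrightarrow{\ \overline{\pi}_{l,x}\ } Sol_{l,x}(P) \xrightarrow{\ \tau_x\ } H^{2}\bigl(g_{l,x}(P)\bigr),
\]
in which the connecting map $\tau_x$ is built by choosing an arbitrary lift of a formal solution to order $l+1$, applying the next prolongation of $P$, and checking that the resulting discrepancy is $\delta$-closed with a class modulo $\delta$-exact terms that does not depend on the chosen lift. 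Exactness at $Sol_{l,x}(P)$ then says exactly that an order-$l$ formal solution lifts to order $l+1$ if and only if it is killed by $\tau_x$; consequently surjectivity of $\overline{\pi}_l$ is equivalent to the vanishing of $\tau$.

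The decisive input is then the prolongation theorem for the $\delta$-complex: if $g_k(P)$ is involutive it is $2$-acyclic, and every prolonged symbol $g_{k+l}(P)$ is again involutive, so in particular $H^{2}(g_{l,x}(P))=0$ for all $l\geq k$ and all $x$. This is a purely linear-algebraic statement whose proof rests on the $\delta$-Poincar\'e lemma and an induction over the Koszul complexes of the prolongations, and I would quote it from \cite{BCGGG}. Feeding it back into the exact sequence above annihilates $\tau$ at every level $l\geq k$, which yields the surjectivity clause; the vector bundle clause follows from a constant-rank argument, since involutivity forces $\dim g_{k+l,x}(P)$ to be given by the quasi-regular formula and hence to be locally constant, after which the exact sequence pins down $\dim Sol_{l+1,x}(P)$ in terms of $\dim Sol_{l,x}(P)$ and $\dim g_{l+1,x}(P)$ and the induction propagates the bundle structure — together with the morphism property of the projections — from the regularity hypothesis at level $k$ to all levels.

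The main obstacle is the propagation of $2$-acyclicity under prolongation. Identifying the lifting obstruction with a Spencer cohomology class is a formal, if somewhat lengthy, diagram chase, and the constant-rank bookkeeping is routine once involutivity is available; but the assertion that $H^{2}$ of every prolonged symbol vanishes as soon as $g_k(P)$ is involutive carries the genuine structural content of the theory, and it is precisely this step that explains why involutivity — rather than mere surjectivity of $\overline{\pi}_k$ — is the hypothesis that unlocks the whole infinite tower of liftings. Once it is in place the induction closes without further difficulty, and in the analytic category a convergence argument upgrades the formal solutions obtained this way to genuine local solutions for arbitrary initial data.
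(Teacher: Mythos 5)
The paper offers no proof of this statement: it is quoted as a classical result (Goldschmidt's version of the Cartan--K\"ahler theorem), with \cite{BCGGG} as the reference, so there is no in-paper argument to compare yours against. Your outline is a correct reconstruction of the standard proof: the identification of the lifting obstruction with a class in the degree-$2$ Spencer cohomology of the prolonged symbol, the vanishing of that cohomology via the implication ``involutive $\Rightarrow$ $2$-acyclic, stable under prolongation'' (the $\delta$-Poincar\'e lemma), and the constant-rank bookkeeping that propagates the vector bundle structure from the level-$k$ regularity hypothesis upward. The one load-bearing ingredient you do not prove --- exactness of the Koszul/Spencer complexes of an involutive symbol in all prolongation degrees --- is exactly the part that must be imported from \cite{BCGGG}, and you flag it as such; the only cosmetic imprecision is the indexing of the cohomology group (the obstruction for lifting from order $l$ to $l+1$ sits in the bigraded group usually written $H^{l-1,2}(g)$ rather than ``$H^2(g_{l,x}(P))$''), together with the tacit assumption that the dimensions entering the quasi-regularity count are locally constant, which is implicit in the regularity hypothesis. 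Neither affects the validity of the argument.
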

It can be shown that the condition of the existence of a quasi-regular basis
can be replaced by a weaker condition. The obstructions to the higher order
successive lift of the $k$\ts{th} order solution are contained in some of the
cohomological groups of a certain complex called \emph{Spencer complex}.

\vspace{0.2cm}

Using a classical result in homological algebra gives, the surjectivity of
$\overline{\pi}_{k+1}$ can be verified in the following way \cite{GrMz_2000}:
\begin{proposition}
  \label{sec:phi}
  There exits a morphism $\varphi\colon Sol_k(P)\rightarrow \mathrm{Coker}\,
  (\sigma_{k+1}(P))$, such that the sequence
  \begin{displaymath}
    Sol_{k+1}(P)\xrightarrow{\ \overline{\pi}_k \ } Sol_k(P)
    \xrightarrow{\ \varphi \ } \mathrm{Coker}\, (\sigma_{k+1}(P))
  \end{displaymath}
  is exact.  Therefore $\overline{\pi}_k$ is surjective if and only if
  $\varphi\equiv 0$.
\end{proposition}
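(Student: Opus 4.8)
The statement is a routine application of the snake lemma to the diagram of jet bundles attached to $P$ and its first prolongation, so the plan is essentially to write that diagram down and read off $\varphi$ as a connecting homomorphism. First I would recall the two canonical exact sequences of vector bundles over $M$,
\begin{align*}
  0 &\to S^{k+1}T^*M \otimes B_1 \to J_{k+1}(B_1) \xrightarrow{\ \pi_k\ } J_k(B_1) \to 0, \\
  0 &\to T^*M \otimes B_2 \to J_1(B_2) \xrightarrow{\ \pi_0\ } B_2 \to 0,
\end{align*}
where $S^1T^*M = T^*M$, $J_0(B_2) = B_2$, and $\pi_k,\pi_0$ forget the top order part of a jet. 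These two rows are joined by the vertical bundle morphisms $\sigma_{k+1}(P)$, $p_{k+1}(P)$ and $p_k(P)$: by the very definition of the symbol of a prolongation, $\sigma_{k+1}(P)$ is the restriction of $p_{k+1}(P)$ to the sub-bundle $S^{k+1}T^*M\otimes B_1\subset J_{k+1}(B_1)$ and its image lies in $T^*M\otimes B_2$, which is precisely the commutativity of the left square, while the right square commutes because prolongation is compatible with truncation, $\pi_0\circ p_{k+1}(P) = p_k(P)\circ \pi_k$. Thus one obtains a commutative diagram with exact rows.

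Next I would apply the snake lemma, fibrewise over each $x\in M$. Since $\Ker p_{k+1}(P) = Sol_{k+1}(P)$, $\Ker p_k(P) = Sol_k(P)$ and $\Ker\sigma_{k+1}(P) = g_{k+1}(P)$, it produces an exact sequence
\begin{displaymath}
  0 \to g_{k+1}(P) \to Sol_{k+1}(P) \xrightarrow{\ \overline{\pi}_k\ } Sol_k(P) \xrightarrow{\ \varphi\ } \mathrm{Coker}\,(\sigma_{k+1}(P)) \to \cdots
\end{displaymath}
whose map $Sol_k(P)\to \mathrm{Coker}\,(\sigma_{k+1}(P))$ is the connecting homomorphism, which we denote $\varphi$. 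Here one checks that the arrow $Sol_{k+1}(P)\to Sol_k(P)$ induced by $\pi_k$ does land in $Sol_k(P)$ — for $u\in\Ker p_{k+1}(P)$ the right square gives $p_k(P)(\pi_k u) = \pi_0(p_{k+1}(P)u) = 0$ — and that it coincides with the map $\overline{\pi}_k$ of the statement, which is immediate from the definitions. Truncating the long exact sequence leaves exactly the three-term exact sequence asserted.

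Finally, exactness at $Sol_k(P)$ means $\Im\overline{\pi}_k = \Ker\varphi$, so $\overline{\pi}_k$ is onto, i.e. $\Im\overline{\pi}_k = Sol_k(P)$, if and only if $\Ker\varphi = Sol_k(P)$, that is $\varphi\equiv 0$. I expect the only genuine care to be in matching the abstract snake-lemma arrows with the geometric maps $\sigma_{k+1}(P)$, $p_{k+1}(P)$ and $\overline{\pi}_k$ and in recording exactness of the two jet sequences; once $P$ is regular, so that $Sol_k(P)$ and $Sol_{k+1}(P)$ are vector bundles, the fibrewise construction assembles into a bundle morphism $\varphi$ and the argument is otherwise unchanged.
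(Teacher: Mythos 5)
Your argument is correct and is exactly the ``classical result in homological algebra'' the paper alludes to: the paper offers no proof of this proposition, only a citation to \cite{GrMz_2000}, and the standard argument there is precisely your snake lemma applied to the commutative diagram built from the two jet-bundle exact sequences and the vertical morphisms $\sigma_{k+1}(P)$, $p_{k+1}(P)$, $p_{k}(P)$, with $\varphi$ the connecting homomorphism. Your closing remark about regularity --- needed so that the fibrewise connecting maps assemble into a morphism of vector bundles --- is the right caveat to record, and your identification of $\varphi$ with the connecting map is consistent with the computational recipe $\varphi(s_{k,x})=\tau(\nabla(Ps))_x$ given afterwards in the paper.
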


\begin{remark}
  The map $\varphi$ is called \emph{obstruction map} and $\mathrm{Coker}\,
  (\sigma_{k+1}(P))$ is called \emph{obstruction space}, because a $k$\ts{th}
  order solution $s\!\in\!  Sol_k(P)$ can be prolongated into a
  $(k\!+\!1)$\ts{st} order solution iff $\varphi(s)\!=\!0$. In particular, if
  $\mathrm{Coker}\, (\sigma_{k+1}(P))\!=\!\{0\}$ then there is no obstruction
  to the prolongation.
\end{remark}
In the practice the map $\varphi$ and therefore the integrability conditions
can be computed as follows:
\begin{remark}
  \label{sec:phi_compute}
  Let be $\tau\colon T^{*}\!\otimes \! B_2\rightarrow K$ a morphism such that
  $\Ker \tau \!=\!\Im \sigma_{k+1}(P)$. Then $K$ is isomorphic to
  $\mathrm{Coker}\, (\sigma_{k+1}(P))$. Moreover, if $s_{k,x}\!=\!j_k(s)_x$ is
  a $k$\ts{th} order solution, that is $(Ps)_x=0$, then
  \begin{align*}
    \varphi(s_{k,x})=\tau(\nabla (Ps))_x,
  \end{align*}
  where $\nabla$ is an arbitrary linear connection on the bundle $B_2$.
\end{remark}

\medskip

\begin{remark}
  Let $(x^i)$ be a local coordinate system on $M$, $(x^i,y^i)$ the associated
  coordinate system on $TM$ in the neighborhood of $v \in TM$.  If $j_k(F)_v
  \in J_{k}(\R_{TM})$ is a $k$th order jet of a real valued function $F$ on
  $TM$ we set
  \begin{equation}
    \label{eq:J_k_loc}
    s_{i_1\cdots i_a \underline{i_{a+1} \cdots i_l}}:= \frac{\partial^l F}{\partial
      x^{i_1} ... \,  \partial x^{i_a} \partial y^{i_{a+1}} ... \,   \partial
      y^{i_l}}(v), \qquad 1 \leq l \leq k. 
  \end{equation}
  Then
  \begin{math}
    (s,s_i,s_{\u i})
  \end{math}
  and 
  \begin{math}
    (s,s_i,s_{\u i}, s_{ij}, s_{\u i j}, s_{\u{ij}})
  \end{math}
  give coordinate systems on $J_{1}(\R_{TM})$ and $J_{2}(\R_{TM})$
  respectively.
\end{remark}

\bigskip

\section{Differential operator of the projective metrizability}
\label{sec:rapcsak}

In this section we derive the PDE system describing the necessary and
sufficient condition for a spray to be projective Finsler metrizable. We have
the following
\begin{proposition}
  \label{thm:proj_met}
  A spray $S$ is projective Finsler metrizable if and only if there exists a
  1-homogeneous Lagrange function $\widetilde{F}\colon TM\!\to\! \R$, such that
  $\frac{\partial^2\widetilde{F}^2}{\partial y^i \partial y^j}$ is positive
  definite on $\TM$ and
  \begin{equation}
    \label{eq:0} 
    i_{S}dd_{J}\widetilde{F}=0.
  \end{equation}
\end{proposition}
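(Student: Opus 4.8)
The plan is to reduce the statement to two facts: that the Rapcsák expression $i_{S}dd_{J}\widetilde F$ is \emph{unchanged} under a projective reparametrization of the spray when $\widetilde F$ is $1$-homogeneous, and that for the canonical geodesic spray of a Finsler function this expression vanishes. For the first fact I would compute in the natural chart $(x^i,y^i)$: since $d_{J}\widetilde F=\frac{\partial\widetilde F}{\partial y^i}\,dx^i$, one gets $dd_{J}\widetilde F=\frac{\partial^{2}\widetilde F}{\partial x^{j}\partial y^{i}}\,dx^{j}\wedge dx^{i}+\frac{\partial^{2}\widetilde F}{\partial y^{j}\partial y^{i}}\,dy^{j}\wedge dx^{i}$, and contracting with $C=y^{j}\partial_{y^{j}}$ gives $\frac{\partial^{2}\widetilde F}{\partial y^{j}\partial y^{i}}y^{j}\,dx^{i}$, which vanishes by Euler's relation \eqref{eq:k-homog} applied to the $0$-homogeneous functions $\partial\widetilde F/\partial y^{i}$. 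The same computation shows $i_{S}dd_{J}\widetilde F$ is semi-basic and that $\frac{\partial^{2}\widetilde F}{\partial y^{i}\partial y^{j}}y^{j}=0$. Hence, if $\widetilde S=S-2\P C$ with $\P$ $1$-homogeneous, then $i_{S}dd_{J}\widetilde F=i_{\widetilde S}dd_{J}\widetilde F$.

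For the second fact, let $\widetilde S$ be the geodesic spray of a Finsler function $\widetilde F$, so $\omega_{\widetilde E}=0$ in \eqref{eq:E_L}--\eqref{eq:omega_E=0} with $\widetilde E=\tfrac12\widetilde F^{2}$. As $\widetilde E$ is $2$-homogeneous, $d\mathcal{L}_{C}\widetilde E-d\widetilde E=d\widetilde E$; using $d_{J}\widetilde E=\widetilde F\,d_{J}\widetilde F$, $d\widetilde E=\widetilde F\,d\widetilde F$ and $i_{\widetilde S}d_{J}\widetilde F=d\widetilde F(J\widetilde S)=d\widetilde F(C)=\mathcal{L}_{C}\widetilde F=\widetilde F$, the Euler--Lagrange equation collapses to $(\widetilde S\widetilde F)\,d_{J}\widetilde F+\widetilde F\,i_{\widetilde S}dd_{J}\widetilde F=0$. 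Contracting once more with $\widetilde S$ and using $i_{\widetilde S}i_{\widetilde S}=0$ yields $(\widetilde S\widetilde F)\,\widetilde F=0$, hence $\widetilde S\widetilde F=0$ on $\TM$, and therefore $i_{\widetilde S}dd_{J}\widetilde F=0$.

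Now the proposition follows. If $S$ is projective Finsler metrizable, then by \eqref{eq:equiv_spray} it is projectively equivalent to the geodesic spray $\widetilde S$ of some Finsler function $\widetilde F$; by the two facts $i_{S}dd_{J}\widetilde F=i_{\widetilde S}dd_{J}\widetilde F=0$, and $\frac{\partial^{2}\widetilde F^{2}}{\partial y^{i}\partial y^{j}}=2\frac{\partial^{2}\widetilde E}{\partial y^{i}\partial y^{j}}$ is positive definite. Conversely, given such a $\widetilde F$, it is a Finsler function; let $\widetilde S$ be its geodesic spray. Then $i_{S}dd_{J}\widetilde F=0=i_{\widetilde S}dd_{J}\widetilde F$, so $i_{S-\widetilde S}dd_{J}\widetilde F=0$. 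Since $JS=C=J\widetilde S$, the difference is vertical, $S-\widetilde S=v^{i}\partial_{y^{i}}$, and the semi-basic expression above forces $\frac{\partial^{2}\widetilde F}{\partial y^{i}\partial y^{j}}v^{j}=0$. The Hessian $\frac{\partial^{2}\widetilde F}{\partial y^{i}\partial y^{j}}$ of the $1$-homogeneous $\widetilde F$ has rank $n-1$ with kernel spanned by $y^{i}$ --- this is where positive definiteness of the Hessian of $\widetilde F^{2}$ enters (write $\frac{\partial^{2}\widetilde E}{\partial y^{i}\partial y^{j}}=\widetilde F\frac{\partial^{2}\widetilde F}{\partial y^{i}\partial y^{j}}+\frac{\partial\widetilde F}{\partial y^{i}}\frac{\partial\widetilde F}{\partial y^{j}}$ and argue directly) --- so $v^{i}=\mu\,y^{i}$; comparing homogeneities via $[C,S-\widetilde S]=S-\widetilde S$ shows $\mu$ is $1$-homogeneous, i.e.\ $S=\widetilde S-2(-\tfrac12\mu)C$ is projectively equivalent to $\widetilde S$. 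Since $\widetilde S$ is Finsler metrizable, $S$ is projective Finsler metrizable. The only genuinely delicate points are the passage between the Rapcsák equation for $\widetilde F$ and metrizability of the \emph{energy} $\widetilde E$ (handled by the homogeneity bookkeeping above) and the rank-$(n-1)$ property of the Hessian of a Finsler function.
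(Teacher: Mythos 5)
Your proof is correct, but it follows a genuinely different route from the one in the paper. The paper's argument imports two facts about the nonlinear connection: that a Finsler function is parallel with respect to the connection of its own geodesic spray, $d_{\widetilde h}\widetilde F=0$, and the transformation law $\widetilde h=h-\P J-d_J\P\otimes C$ of horizontal projectors under a projective change; it then identifies the projective factor as $\P=\tfrac{1}{2\widetilde F}S\widetilde F$ and reduces $d_h\widetilde F-\tfrac12 d_Jd_S\widetilde F=0$ to $i_Sdd_J\widetilde F=0$ by Fr\"olicher--Nijenhuis calculus. You instead work directly from the Euler--Lagrange form \eqref{eq:E_L}: your identity $\omega_{\widetilde E}=(\widetilde S\widetilde F)\,d_J\widetilde F+\widetilde F\,i_{\widetilde S}dd_J\widetilde F$ together with the contraction by $\widetilde S$ gives $i_{\widetilde S}dd_J\widetilde F=0$ for the geodesic spray, and the coordinate computation $i_Cdd_J\widetilde F=0$ (Euler's relation for the $0$-homogeneous $\partial\widetilde F/\partial y^i$) gives the projective invariance of the Rapcs\'ak expression, so the forward direction follows from \eqref{eq:equiv_spray} without any mention of connections. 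What your approach buys is, first, self-containedness --- no appeal to $d_{\widetilde h}\widetilde F=0$ or to the projector transformation law --- and, second, an explicit converse: the paper's proof is written only in the forward direction (its chain of identities is reversible, but this is left implicit), whereas you actually show that $i_{S-\widetilde S}dd_J\widetilde F=0$ together with the rank-$(n-1)$ structure of the Hessian of the $1$-homogeneous $\widetilde F$ (kernel spanned by $y$, which is exactly where positive definiteness of $\partial^2\widetilde F^2/\partial y^i\partial y^j$ enters) forces $S-\widetilde S=\mu C$ with $\mu$ $1$-homogeneous by the spray homogeneity $[C,S-\widetilde S]=S-\widetilde S$. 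The price is that two of your key steps are coordinate computations rather than intrinsic identities, and you implicitly use that $\widetilde F\neq 0$ on $\TM$ (which does follow, since $\widetilde F^2=g_{ij}y^iy^j>0$ by positive definiteness) when dividing by $\widetilde F$ and when invoking the geodesic spray of $\widetilde E$; these are the same normalizations the paper glosses over, so they are not gaps.
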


\begin{proof}
  The spray $S$ is projective Finsler metrizable if and only if there exists a
  Finsler metrizable spray $\widetilde{S}$ which is projective equivalent to
  $S$.  Because of the projective equivalence, there exists a function
  $\mathcal P$, such that $\widetilde{S}=S-2\mathcal P C$.  Let us denote by
  $\widetilde{F}$ the Finsler function associated to $\widetilde{S}$. It is well
  known that $\widetilde{F}$ is invariant by the parallel translation
  associated to the connection $\widetilde{\Gamma}=[J,\widetilde{S}]$ and
  therefore we have $d_{\widetilde{h}}\widetilde{F}=0$. Using the relation
  \begin{displaymath}
    \widetilde{h}=h-\P  J-d_J\P \otimes C
  \end{displaymath}
  between the horizontal projectors (\cite{BM_2012}, chapter 4) and the
  1-homogeneity of $\widetilde{F}$, we get
  \begin{alignat}{1}
    \label{eq:lambda}
    0 = d_{\widetilde{h}}\widetilde{F} = d_{h}\widetilde{F}-d_{\P
      J}\widetilde{F}-d_J\P C\widetilde{F}
    =d_{h}\widetilde{F}-\P d_{ J}\widetilde{F}-\widetilde{F}d_J\P
    = d_{h}\widetilde{F}- d_{J}(\P \widetilde{F}).
  \end{alignat}
  Substituting $S$ in \eqref{eq:lambda} and using $JS=C$ and the homogeneity
  of $\widetilde{F}$ and $\P$, we get
  \begin{displaymath}
    i_Sd_{\widetilde{h}}\widetilde{F} =S \widetilde{F}
    - C(\P \widetilde{F}) =S \widetilde{F}-2\P \widetilde{F} =0
  \end{displaymath}
  and we can find, that the projective factor is
  \begin{math}
    \P =\frac{1}{2\widetilde{F}}S \widetilde{F}.
  \end{math}
  Replacing $\P$ in \eqref{eq:lambda} by the above expression we
  get
  \begin{alignat*}{1}
    d_h\widetilde{F}-d_J\Big(\frac{1}{2\widetilde{F}} (\widetilde{F}\,
    d_S\widetilde{F})\Big)
    = d_h\widetilde{F}-\frac{1}{2}d_J (d_S\widetilde{F})=0.
  \end{alignat*}
  Using \eqref{eq:h_v} and the relation $d_{[J,S]}= d_Jd_S-d_Sd_J$ we can
  obtain
  \begin{displaymath}
    \begin{aligned}[b]
      0 & =d_{\Gamma+I}\widetilde{F}-d_J d_S\widetilde{F}
      =d_{[J,S]}\widetilde{F}+d\widetilde{F}-d_J d_S\widetilde{F}
      \\
      & = -(i_Sd+di_S)d_J\widetilde{F}+d\widetilde{F}
      =-i_Sdd_J\widetilde{F}-dC\widetilde{F}+d\widetilde{F}
      =-i_Sdd_J\widetilde{F}.  
    \end{aligned}
    \qedhere
  \end{displaymath}
\end{proof}
We note, that a coordinate version of the above theorem was proved by
A. Rapcs\'ak in \cite{Rapcsak_1961} and a coordinate free version was given in
\cite{Kl_1968, SzLK_2014,Szilasi_2002}. Here we presented a different proof.

\medskip

\begin{definition}
  Let $S$ be a spray on $M$.  The partial differential system composed by the
  equation \eqref{eq:0} and the 1-homogeneity condition \eqref{eq:k-homog} is
  called the \emph{Rapcsák system}.
\end{definition}
According to Proposition \ref{thm:proj_met} the projective metrizability leads
to the investigation of the Rapcsák system. 
\begin{remark}
  The Rapcsák system is equivalent to the system composed by the
  Euler-Lagrange equations \eqref{eq:E-L_loc} and the 1-homogeneity condition
  \eqref{eq:k-homog}.
\end{remark}

We remark that the system composed by the Euler-Lagrange equations and the
$k$-homogeneity condition for $k \!\neq\! 1$ can be reduced to a first order
partial differential system which can be interpreted in terms of the holonomy
distribution associated to the spray $S$. When $k=2$, (this case corresponds
to the Finsler metrizability problem) the computation can be found in
\cite{Mz08}.  The same reasoning can be applied for other value of $k$,
$k\!\neq\! 1$. But this method cannot be used for the value
$k=1$. Nevertheless, in some special situations, the Rapcsák system can also
be reduced to a first order PDE system.  This is the case for example for the
canonical spray of a Lie group, if one seeks for an invariant solution to the
projective Finsler metrizability problem. In that case, the Rapcsák system can
be reduced to a first order system, and one can show, that the invariant
Riemann, Finsler and projective Finsler meterizability problems are equivalent
\cite{MiMz}.

\subsection*{Integrability conditions of the Rapcsák system}

\noindent
Let us consider the differential operator $P_1$ corresponding to the Rapcsák
system:
\begin{equation}
  P_1=(P_{S},P_{C}),
\end{equation}
where \vspace{-12pt}
\begin{alignat}{2}
  & P_S\colon C^{\infty}(TM) \longrightarrow Sec \, T^{*}, & \qquad
  P_S(F)&=i_Sdd_JF,
  \\
  & P_C\colon C^{\infty}(TM) \longrightarrow C^{\infty}(TM), & \qquad
  P_{C}(F)&={\mathcal L}_CF-F.
\end{alignat}
From the local expression it is clear that $P_C$ is a first and $P_S$ is a
second order differential operator.  The associated morphisms are defined on
the first and second order jet spaces respectively. Using the coordinate
system \eqref{eq:J_k_loc} we get
\begin{alignat*}{2}
  p_1(P_C) &\colon J_1 (\R_{TM}) \longrightarrow \R,& \qquad j_1(F)&
  \longrightarrow y^i F_{\u i} -F,
  \\
  p_2(P_S) & \colon J_2 (\R_{TM}) \longrightarrow T^*,& \qquad j_2(F)&
  \longrightarrow (y^i F_{i \u j}\!+\!f^i F_{\u ij} \!-\! F_j)dx^i \!-\!
  (F_{\u i} \!+\! y^j F_{\u ij} \!-\!F_i)dy^i.
\end{alignat*}
The interesting feature of the Rapcsák system is that it is composed by
differential operators of different orders. To find the integrability
conditions of the system we consider the prolongation of the lower order
equation. The morphism associated to this system is
\begin{displaymath}
  p_2(P_1) =   p_2(P_S) \times p_2(P_C) \colon \quad J_2(\R_{TM})
  \longrightarrow T^* \! \times \! J_1(\R_{TM}).
\end{displaymath}
\begin{proposition}
  \label{sec:first-integr-cond}
  A 2\ts{nd} order solution of $P_1\!=\!(P_S, P_C)$ at $x\!\in\! \TM$ can be
  lifted into a 3\ts{rd} order solution, if and only if one has
  $i_{\Gamma}dd_JF=0$ at $x$, where $\Gamma=[J,S]$ is the canonical nonlinear
  connection associated to $S$.
\end{proposition}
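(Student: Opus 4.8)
The plan is to apply Proposition \ref{sec:phi} and Remark \ref{sec:phi_compute} to the operator $P_1=(P_S,P_C)$ at order $k=2$, i.e. to compute the obstruction map $\varphi\colon Sol_2(P_1)\to \mathrm{Coker}\,(\sigma_3(P_1))$ and show that its vanishing is equivalent to the stated condition $i_\Gamma dd_JF=0$. First I would determine the prolonged symbol $\sigma_3(P_1)=\sigma_3(P_S)\times\sigma_3(P_C)$ explicitly in the coordinates \eqref{eq:J_k_loc}: the third-order symbol of the second-order operator $P_S$ lands in $S^1T^*\otimes T^*$, while the third-order symbol of the first-order operator $P_C$ lands in $S^2T^*$. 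The highest-order part of $P_S$ is $y^iF_{i\underline j}+f^iF_{\underline{ij}}$ in the $dx^j$-component and $F_{\underline i}+y^jF_{\underline{ij}}$ (modulo lower order) in the $dy^i$-component, so the cokernel computation is really about which linear combinations of third derivatives $s_{abc}$, $s_{ab\underline c}$, $s_{a\underline{bc}}$, $s_{\underline{abc}}$ are hit. I would then exhibit a morphism $\tau\colon T^*\otimes(T^*\times J_1(\R_{TM}))\to K$ with $\Ker\tau=\Im\sigma_3(P_1)$ as in Remark \ref{sec:phi_compute}; the target $K$ should turn out to be (a sub-bundle isomorphic to) the space of semi-basic 2-forms, and the recipe gives $\varphi(j_2F)=\tau(\nabla(P_1F))$ for a conveniently chosen connection $\nabla$ on $T^*\times J_1(\R_{TM})$.

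The computational heart is then to recognize the resulting obstruction tensor. The natural guess is that $\varphi(j_2F)$, for a second-order solution, is (up to a nonzero constant and the already-vanishing terms) exactly the semi-basic part of $dd_J(P_CF)$ together with the antisymmetrized derivative of $P_SF$, and that these combine into $i_\Gamma dd_JF$. Here I would exploit that $P_CF=\mathcal L_CF-F=d_CF-F$ and that on a solution $d_JP_SF$ can be re-expressed using the identities $[J,S]=\Gamma$, $d_{[J,S]}=d_Jd_S-d_Sd_J$, $JS=C$, and $[J,C]=J$ — exactly the toolkit used in the proof of Proposition \ref{thm:proj_met}. The key algebraic manipulation is to write $i_\Gamma dd_JF = i_{2h-\mathrm{Id}}dd_JF$ and to show that the $i_{\mathrm{Id}}dd_JF=2\,dd_JF$ piece and part of the $h$-piece are consequences of $P_SF=0$ and $P_CF=0$ and their first prolongations, leaving $i_\Gamma dd_JF=0$ as the genuinely new third-order condition. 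Equivalently, since $i_SR = i_S\tfrac12[h,h]=\Phi$ and $R=\tfrac13[J,\Phi]$, one can try to phrase the obstruction through the connection-theoretic identity $i_\Gamma dd_JF = d_hd_JF - d_Jd_hF + (\text{terms vanishing on solutions})$, so that the obstruction is the failure of $d_hd_JF$ to be symmetric, which is precisely a curvature-type expression contracted with $dF$.

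I expect the main obstacle to be the bookkeeping in the cokernel computation: one must carefully count which third-derivative coordinates are free and show the obstruction space is exactly one copy of $\Lambda^2_v(TM)$ (dimension $\binom n2$), neither more nor less, and that the $\tau$-image of $\nabla(P_1F)$ really collapses — after using $P_SF=0$, $P_CF=0$, and the relations among their first derivatives — to the single tensor $i_\Gamma dd_JF$ with no leftover spurious terms. A secondary subtlety is checking independence of the auxiliary connection $\nabla$ (guaranteed abstractly by Remark \ref{sec:phi_compute} once $P_1F\equiv0$ at the point, but one should make sure the intermediate formulas are written so this is manifest). Once the obstruction tensor is identified as $i_\Gamma dd_JF$, the proposition follows immediately: by Proposition \ref{sec:phi}, a second-order solution lifts to third order iff $\varphi$ vanishes on it, i.e. iff $i_\Gamma dd_JF=0$ at $x$.
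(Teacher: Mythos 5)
Your overall strategy coincides with the paper's: apply Proposition \ref{sec:phi} and Remark \ref{sec:phi_compute} to $P_1$ at order $2$, construct a morphism $\tau$ presenting $\mathrm{Coker}\,\sigma_3(P_1)$, and identify $\tau(\nabla(P_1F))_x$ with $i_\Gamma dd_JF$ using $d_{[J,S]}=d_Jd_S-d_Sd_J$, $JS=C$ and $[J,C]=J$ --- exactly the identities the paper uses. The one step of your plan that would fail as written is the claim that the obstruction space is ``exactly one copy of $\Lambda^2_v(TM)$ (dimension $\binom{n}{2}$), neither more nor less.'' It is not: by the paper's count, $\rank\sigma_3(P_1)=\tfrac{7n^2-n}{2}$ while the target $(T^{*}\otimes T^{*})\times S^{2}T^{*}$ has dimension $6n^2+n$, so $\mathrm{Coker}\,\sigma_3(P_1)$ has dimension $\tfrac{5n^2+3n}{2}$, and the presenting morphism $\tau_1=(\tau^1_{S},\tau^2_{S},\tau^{1}_{SC},\tau^{2}_{SC})$ necessarily has four blocks of components rather than one. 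No morphism with a $\binom{n}{2}$-dimensional target can have kernel equal to $\Im\sigma_3(P_1)$; if you settle for $\Ker\tau\supseteq\Im\sigma_3(P_1)$, Remark \ref{sec:phi_compute} only yields that $i_\Gamma dd_JF=0$ is \emph{necessary} for the lift, and the ``if'' half of the proposition is lost.

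The repair is precisely the extra work the paper carries out: after proving exactness of $S^{3}T^{*}\to(T^{*}\otimes T^{*})\times S^{2}T^{*}\to K_1\to 0$ by comparing $\rank\sigma_3(P_1)$ with $\dim\Ker\tau_1$, one evaluates all four components of $\tau_1(\nabla P_1F)_x$ on a second-order solution and checks that three of them vanish identically --- $\tau^2_{S}$ because $dd_JF(S,S)=0$, $\tau^{1}_{SC}$ via $J[JX,S]=JX$, and $\tau^{2}_{SC}$ via $d_Jd_C-d_Cd_J=d_J$ --- so that only the $\tau^1_{S}$-component, equal to $(i_\Gamma dd_JF)(hX,hY)$, survives. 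Only then does $\varphi(s)=0$ reduce to $i_\Gamma dd_JF=0$ and give the stated equivalence. The rest of your outline (the symbol computation, the identification of the surviving component, the independence of the auxiliary connection $\nabla$) matches the paper's argument.
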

\noindent
\emph{Proof.} The symbols are defined by the highest order part of the
operators. For $P_C$ we find
\begin{alignat*}{2}
  \sigma_1(P_C)\colon & T^* \longrightarrow \R, \quad & & \sigma_1( P_C)A_1 =
  A_1(C).
  \intertext{The symbol of $P_S$ and the prolongation of the symbol of $P_C$
    are}
  \sigma_2(P_C)\colon & S^2T^* \longrightarrow T^*, \qquad &
  &\big(\sigma_2(P_C)A_2\big)(X) = A_2(X, C),
  \\
  \sigma_2(P_S)\colon & S^2T^* \longrightarrow T^*, \qquad &
  &\big(\sigma_2(P_S)A_2\big)(X) = A_2(S, JX)-A_2(X, C),
  \intertext{for every $X\!\in\! T$, $A_1\!\in\! T^{*}$, $A_2\!\in\! S^{2}T^{*}$.  The
    prolongations of the symbols at third order level are}
  \sigma_3(P_C): & S^3T^* \longrightarrow T^* \!\otimes S^{2}T^*, \quad &&
  \big(\sigma_3(P_C)A_3\big)(X,Y) = A_3(X,Y, C),
  \\
  \sigma_3(P_S): & S^3T^* \longrightarrow T^*\! \otimes T^*, \quad &&
  \big(\sigma_3(P_S)A_3\big)(X,Y) = A_3(X, S, JY)-A_3(X, Y,C),
\end{alignat*}
where $X,Y\! \in\! T$, $A_3\!\in\! S^{3}T^{*}$ and we have 
\begin{align*}
  \sigma_3(P_1)=\big(\sigma_3(P_S), \sigma_3(P_C)\big) \colon
  S^3T^*\longrightarrow (T^{*} \!\otimes T^{*}) \times S^{2} T^{*}.
\end{align*}
Let us consider the map
\begin{math}
  \tau_1 := (\tau^1_{S}, \, \tau^2_{S}, \, \tau^{1}_{SC}, \, \tau^{2}_{SC})
\end{math}
where
\begin{alignat}{2}
  \label{eq:symb_1_6}
  &\tau^{1}_{S} (B_S,B_C)(X,Y)&&= B_S(JX, hY) \!-\! B_S(hY, JX) \!-\!B_S(JY,
  hX) \!+\! B_S(hX, JY),
  \\
  \label{eq:symb_1_7}
  &\tau^{2}_{S} (B_S,B_C)(X)&&=B_S(X,S),
  \\
  \label{eq:symb_1_9}
  &\tau^{1}_{SC} (B_S,B_C)(X,Y)&&=B_{S}(X,JY)+B_{C}(X,JY),
  \\
  &\tau^{2}_{SC} (B_S,B_C)(X,Y)&&=B_{S}(C,hX)-B_{C}(S,JX)+B_C(hX,C),
\end{alignat} 
for $B_S\!\in\! T^{*}\!\otimes T^{*}$\!, $B_C\!\in\! S^2T^{*}$\!, $X,Y\!\in\!
T$.  
\begin{lemma}
  We have $\Im \sigma_3(P_1)\!=\!\Ker \tau_1$ that is, if we denote
  $K_1\!=\!\mathrm{Im}\, \tau_1$ then the sequence
  \begin{align}
    \label{ex:1}
    S^{3}T^{*}\xrightarrow{ \ \sigma_3(P_1) \ } (T^{*} \!\otimes T^{*})\!\times
    S^{2} T^{*}\xrightarrow{ \ \tau_1 \ } K_1 \rightarrow 0
  \end{align}
  is exact.
\end{lemma}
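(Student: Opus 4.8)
The plan is to prove exactness of the sequence \eqref{ex:1} in two steps: first the inclusion $\Im\,\sigma_3(P_1)\subseteq\Ker\,\tau_1$, which is a direct algebraic verification, and then the reverse inclusion $\Ker\,\tau_1\subseteq\Im\,\sigma_3(P_1)$, which is the substantive part. The exactness at $K_1$ is automatic since $K_1$ is \emph{defined} as $\Im\,\tau_1$.

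\medskip\noindent
\textbf{Step 1: $\Im\,\sigma_3(P_1)\subseteq\Ker\,\tau_1$.}
I would take an arbitrary $A_3\in S^3T^*$ and compute $\tau_1\bigl(\sigma_3(P_1)A_3\bigr)$ componentwise, using the explicit formulas for $\sigma_3(P_S)$ and $\sigma_3(P_C)$. Here $B_S=\sigma_3(P_S)A_3$ so $B_S(X,Y)=A_3(X,S,JY)-A_3(X,Y,C)$, and $B_C=\sigma_3(P_C)A_3$ so $B_C(X,Y)=A_3(X,Y,C)$. For $\tau^2_S$ one gets $B_S(X,S)=A_3(X,S,JS)-A_3(X,S,C)=A_3(X,S,C)-A_3(X,S,C)=0$ using $JS=C$. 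For $\tau^1_{SC}$ one gets $B_S(X,JY)+B_C(X,JY)=A_3(X,S,J^2Y)-A_3(X,JY,C)+A_3(X,JY,C)=0$ using $J^2=0$. For $\tau^1_S$ the terms combine to an expression symmetric in the appropriate arguments of $A_3$ (using that $A_3$ is totally symmetric and $J$, $h$ have complementary images), and I expect the four-term alternating sum to cancel in pairs — this is the one computation requiring care, but it is routine once one expands $B_S(JX,hY)=A_3(JX,S,JhY)-A_3(JX,hY,C)$ and notes $JhY$ relates to the vertical part. For $\tau^2_{SC}$ a similar bookkeeping with $h S = S$, $JhX=JX$, and $J^2=0$ yields zero.

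\medskip\noindent
\textbf{Step 2: $\Ker\,\tau_1\subseteq\Im\,\sigma_3(P_1)$.}
This is the main obstacle and I would approach it by a dimension count combined with the inclusion from Step 1. Concretely: from Step 1 we have $\Im\,\sigma_3(P_1)\subseteq\Ker\,\tau_1$, so it suffices to show $\dim\,\Im\,\sigma_3(P_1)\geq\dim\,\Ker\,\tau_1$, equivalently (by rank–nullity applied to both maps) that $\dim\Ker\,\sigma_3(P_1)+\dim\Ker\,\tau_1=\dim\bigl((T^*\otimes T^*)\times S^2T^*\bigr)=\dim\Ker\,\tau_1+\dim\Im\,\tau_1$, i.e. $\dim\Ker\,\sigma_3(P_1)=\dim K_1$. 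Now $\Ker\,\sigma_3(P_1)=g_{3,x}(P_1)$ is the third prolongation of the symbol, which can be computed explicitly in the local coordinates $s_{i_1i_2i_3}$ (with various numbers of underlines) using the local forms of $\sigma_3(P_S)$ and $\sigma_3(P_C)$; and $\dim K_1=\dim\Im\,\tau_1$ can be read off from the explicit formulas \eqref{eq:symb_1_6}–\eqref{eq:symb_1_9} by determining the image subspace inside $(T^*\otimes T^*)\times S^2T^*$. The cleanest organization is probably to adapt a frame to the splitting $T=HTM\oplus VTM$ induced by $\Gamma$ (so $S$ spans part of $HTM$ and $C=JS$ part of $VTM$), express all four components of $\tau_1$ in block form, and thereby identify both dimensions as explicit polynomials in $n=\dim M$; checking they agree finishes the proof. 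Alternatively — and this may be shorter — one can argue directly: given $(B_S,B_C)\in\Ker\,\tau_1$, the conditions $\tau^1_{SC}=0$ and $\tau^1_S=0$ pin down enough components of $B_S$ and $B_C$ that one can explicitly solve for a symmetric $A_3$ with $\sigma_3(P_S)A_3=B_S$ and $\sigma_3(P_C)A_3=B_C$, the remaining conditions $\tau^2_S=0$, $\tau^2_{SC}=0$ guaranteeing the solvability of the overdetermined pieces. I expect the dimension-count route to be the more reliable one to write down cleanly.
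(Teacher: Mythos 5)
Your overall strategy---verify $\tau_1\circ\sigma_3(P_1)=0$ for one inclusion, then close the argument by a dimension count in a frame adapted to the horizontal/vertical splitting---is exactly the paper's, and your Step 1 computations are correct. However, there is a concrete error in the reduction in Step 2 that would make the verification fail. You correctly reduce the problem to showing $\dim \Im\sigma_3(P_1)\ge\dim \Ker\tau_1$, but the chain of ``equivalently''s that follows applies rank--nullity to the wrong space: since $\dim \Im\sigma_3(P_1)=\dim S^3T^*-\dim \Ker\sigma_3(P_1)$, the identity to verify is
\begin{equation*}
\dim \Ker\sigma_3(P_1)+\dim \Ker\tau_1=\dim S^3T^*,
\end{equation*}
not $=\dim\bigl((T^*\otimes T^*)\times S^2T^*\bigr)$; these two totals differ (for $n=2$ they are $20$ and $26$). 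Consequently your final target ``$\dim \Ker\sigma_3(P_1)=\dim K_1$'' is false: the counts give $\dim \Ker\sigma_3(P_1)=\frac{8n^3-9n^2+7n}{6}$ while $\dim K_1=\frac{5n^2+3n}{2}$ (already $7\neq 13$ for $n=2$). If you carried out your program as written, you would obtain two polynomials in $n$ that do not agree and be unable to conclude.

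The fix is one line: compare $\rank\sigma_3(P_1)=\dim S^3T^*-\dim g_3(P_1)$ directly with $\dim \Ker\tau_1$. This is what the paper does: in the adapted basis $\{h_i,v_i\}$ with $h_n=S$, $v_n=C$, it counts the independent equations cutting out $g_3(P_1)$ inside $S^3T^*$ to get $\rank\sigma_3(P_1)=\frac{7n^2-n}{2}$, and separately counts pivot terms in $\tau^1_S=\tau^2_S=\tau^1_{SC}=\tau^2_{SC}=0$ to get $\dim \Ker\tau_1=\frac{7n^2-n}{2}$; equality of these two numbers together with your Step 1 yields $\Im\sigma_3(P_1)=\Ker\tau_1$. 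With that correction your proposal coincides with the paper's proof; your alternative suggestion of explicitly solving for $A_3$ is not needed.
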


\begin{proof}
  By the construction, we have to check the exactness in the second term. It
  is easy to compute that $\sigma_3(P_1) \circ \tau_1 =0$ and therefore
  \begin{math}
    \Im \sigma_3(P_1)\subset \Ker \tau_1.
  \end{math}
  Let us compute $\dim \mathrm{Ker} \sigma_3(P_1)$.  We consider the basis
  \begin{equation}
    \label{eq:basis_1}
    \mathcal B:=\left\{h_1,\dots ,h_n,v_1,\dots,v_n\right\} \quad \subset T_x,
  \end{equation}
  where $h_i$ are horizontal, $h_n\!=\!S$, $Jh_i\!=\!v_i$, $i\!=\!1,\dots,n$
  (and therefore $v_n=C$).  In the sequel we denote the components of a
  symmetric tensor $A\in S^k T^*$ with respect to \eqref{eq:basis_1} as
  \begin{equation}
    \label{eq:B_notation}
    A_{i_1\dots i_j \u{i_{j+1}}\dots \u{i_k}}:=
    A(h_{i_1}, \dots, h_{i_j},v_{i_{j+1}},\dots, v_{i_k}).
  \end{equation}
  It is clear that
  \begin{math}
    \Ker \sigma_3(P_1)\!=\!\Ker \sigma_3(P_S)\cap\Ker \sigma_3(P_C).
  \end{math}
  The symmetric tensor $A \in S^3T^*$ is in $\Ker \sigma_3(P_C)$ if
  \begin{equation}
    \label{eq:symb_1}
    A_{ij\u n}= A_{i\u j \u n}= A_{\u i\u j \u n}=0,
  \end{equation}
  and $A \in S^3T^*$ is an element of $\Ker \sigma_3(P_S)$ if
  \begin{alignat}{1}
    \label{eq:symb_1_1}
    \sigma_3(P_S)(A) (h_i, h_j)& = A(h_i, h_n, v_j)- A(h_i, h_j, v_n)= A_{in
      \u j}-A_{i j \u n}=0,
    \\
    \label{eq:symb_1_2}
    \sigma_3(P_S)(A) (h_i, v_j)& = - A(h_i, v_j, v_n)=- A_{i \u j \u n}=0,
    \\
    \label{eq:symb_1_3}
    \sigma_3(P_S)(A) (v_i, h_j)& = A(v_i, h_n, v_j)- A(v_i, h_j, v_n)=A_{\u i
      n \u j}-A_{\u i j \u n}=0,
    \\
    \label{eq:symb_1_4}
    \sigma_3(P_S)(A) (v_i, v_j)& =- A(v_i, v_j, v_n)=- A_{\u i \u j \u n}=0,
  \end{alignat}
  for $i,j\!=\!1,\dots,n$. Taking into account the symmetry of $A$ we have
  $2\frac{n(n+1)}{2}+n^{2}$ independent equations in \eqref{eq:symb_1}.
  Moreover, counting the independent equations in
  \eqref{eq:symb_1_1}-\eqref{eq:symb_1_4} we get that (\ref{eq:symb_1_2}) and
  (\ref{eq:symb_1_4}) trivially hold because of (\ref{eq:symb_1}). From
  (\ref{eq:symb_1_1}) we have only $n^2-n$ independent equations because for
  $j\!=\!n$ the equations are trivially satisfied, and from
  (\ref{eq:symb_1_3}) we have $\frac{n(n-1)}{2}$ independent equations because
  again, for $j\!=\!n$ they are trivially satisfied.  Consequently, we have
  \begin{math}
    2\frac{n(n+1)}{2}+2n^2-n +\frac{n(n-1)}{2}=\frac{7n^{2}-n}{2}
  \end{math}
  independent equations in the system
  \eqref{eq:symb_1}-\eqref{eq:symb_1_4}. Therefore we get
  \begin{equation}
    \label{eq:6}
	\dim (g_3(P_1))=\dim \Ker \sigma_3(P_1)=\dim
    S^{3}T^{*}-\frac{7n^{2}-n}{2}=\frac{8n^{3}-9n^{2}+7n}{6}
  \end{equation}
  and 
  \begin{equation}
    \label{eq:1}
	\mathrm{rank}\sigma_3(P_1)=\frac{7n^{2}-n}{2}.
  \end{equation}
  On the other hand, let us compute $\dim \Ker \tau_1$.  The pivot terms for
  the equation $\tau^{1}_{S}\!=\!0$ are $B_S(v_i,h_j)$,
  $i\!<\!j\!<\!n$. Furthermore, $B_S(v_i,h_n)$, $B_S(h_i,h_n)$,
  $i\!=\!1,\dots,n$ are pivot terms for $\tau^{2}_{S}\!=\!0$. Therefore the
  number of independent equations for $\Ker \tau^{1}_{S}$ and $\Ker
  \tau^{2}_{S}$ are $\frac{(n-1)(n-2)}{2}$ and $2n$, respectively.  Moreover,
  the pivot terms for the equations $\tau^{1}_{SC}\!=\!0$
  and $\tau^{2}_{SC}\!=\!0$ are $B_S(h_i,v_j)$, $B_S(v_i,v_j)$,
  $i,j\!=\!1,\dots,n$, and $B_S(v_n,h_i)$, $i\!=\!1,\dots,n\!-\!1$, giving in
  addition $2n^2\!+\!n\!-\!1$ independent equations.
  \begin{equation}
    \label{eq:2}
    \dim \Ker \tau_1\!=\!\dim S^2T^*\!+\!\dim 
    (T^{*}\!\otimes\! T^{*})-\left[\frac{(n\!-\!1)(n\!-\!2)}{2}
      \!+\!2n^{2}\!+\! 3n\!-\!1\right]
    =\frac{7n^{2}\!-\!n}{2}.
  \end{equation}
  Comparing \eqref{eq:1} and \eqref{eq:2} we get $\Im \sigma_3(P_1)\!=\!  \Ker
  \tau_1$.
\end{proof}
\bigskip

\noindent
\emph{Proof of Proposition \ref{sec:first-integr-cond}}.  The morphisms, the
symbols and the obstruction map associated to the Rapcsák system can be
represented in the following commutative diagram:
\begin{displaymath}
  \begin{CD}
    g_3(P_1)@>>> S^{3}T^{*} @>\sigma_3(P_1)>> (T^{*}\!\otimes T^{*}\!)\times
    S^2 T^{*} @>\tau_1>> K_1 \longrightarrow 0
    \\
    @VVV @VV \epsilon V @VV\epsilon V
    \\
    Sol_3(P_1) @> i >> J_3(\RTM) @>p_3(P_1)>> J_1(T^{*})\times J_2(\RTM)
    \\
    @VV \overline{\pi}_2V @VV \pi_2 V @VV \pi_0\!\times\pi_1 V
    \\
    Sol_2(P_1) @> i >> J_2(\RTM) @>p_2(P_1)>> T^{*}\times J_1(\RTM)
  \end{CD}
\end{displaymath}

\bigskip

\noindent
Let $s\!=\!j_{2}(F)_x\!\in \!Sol_{2,x}(P_1)$ be a second order solution of
$P_1$ at $x$, that is
\begin{equation}
  \label{eq:P_1_x}
  (i_Sdd_JF)_x\!=\!0, \qquad (\L_CF\!-\!F)_x\!=\!0, \qquad 
  \big(\nabla(\L_CF\!-\!F)\big)_x\!=\!0.
\end{equation}
The integrability condition can be computed in terms of $ \tau_1 \!=\!
(\tau^1_{S}, \, \tau^2_{S}, \, \tau^{1}_{SC}, \, \tau^{2}_{SC})$. According to
Remark \ref{sec:phi_compute}, $s$ can be lifted into a third order solution if
and only if $\varphi(s)=0$, where
\begin{math}
  \varphi(s)=(\tau_1 \nabla P_1(F))_x.
\end{math}
Computing $\varphi(s)$ we find that
\begin{enumerate}
\item using the notation $\omega:=i_Sdd_JF$ we have $\omega_x\!=\!0$ from
  \eqref{eq:P_1_x} and
  \begin{align*}
	\tau^{1}_{S}(\nabla(P_1F) )_x&(X,Y)=\nabla \omega(JX, hY) - \nabla
    \omega(hY, JX) -\nabla \omega(JY, hX) + \nabla \omega(hX, JY)\frac{}{}
	\\
	&=JX\omega(hY)-hY\omega(JX)-JY\omega(hX)+hX\omega(JY)=i_Jd\omega(hX,hY).
  \end{align*}
  Moreover, $di_{S}=-i_Sd+d_S$, $i_Jd_S=i_{[J,S]}+d_Si_J$ and $d_Jd_J=0$, we
  obtain that
  \begin{equation*}
    \begin{aligned}
      i_{J}d\omega(hX,hY)_x&=(i_{J}d_Sdd_{J}F\!-\!i_{J}i_{S}ddd_{J}F)_x(hX,hY)
      \\
      & =(i_{[J,S]}dd_{J}F+d_Si_{J}dd_{J}F)_x(hX,hY)
      =(i_{\Gamma}dd_{J}F)_x(hX,hY).
    \end{aligned}
  \end{equation*}

\item
  \begin{math}
    \tau^{2}_{S}(\nabla (P_1F))_x=(\nabla \omega)_x(X,S)
    =X_x\omega(S)=X_xdd_J(S,S)=0.
  \end{math}
\item Using the identity $J[JX,S]=JX$ we have
  \begin{equation*}
    \begin{aligned}
      \tau^{1}_{SC}(\nabla (P_1F))_x&=X_x(i_{S}dd_{J}F(JY))+X_x(JY(CF-F))
      \\
      &=X_x (-JYd_{J}F(S)-d_{J}F([S,JY]))+X_x(JYCF-JYF)
      \\
      &=-X_x(J[S,JY]F)-X_x(JYF) =X_x(JYF)-X_x(JYF)=0.
    \end{aligned}
  \end{equation*}
\item We have
  \begin{math}
	dd_J(CF-F)(S,hX)=S\big(JX(CF-F)\big)-hX\big(C(CF-F)\big).
  \end{math}
  Then
  \begin{align*}
	\tau^{2}_{SC}(\nabla (P_1F))_x&
    =C(i_Sdd_JF(hX))-S(JX(CF-F))+hX(C(CF-F))
	\\
	&=d_Cdd_JF(S,hX)-dd_Jd_CF(S,hX)+dd_JF(S,hX).
  \end{align*}
  Since  $d_Jd_C-d_Cd_J=d_{[J,C]}=d_J$ it follows that
  \begin{align*}
    d_Cdd_JF-dd_Jd_CF+dd_JF=d_Cdd_JF-dd_Cd_JF=d_Cdd_JF-d_Cdd_JF=0.
  \end{align*}
\end{enumerate}
From the above computation it follows that 
\begin{math}
  \varphi(s)\!=\!(\tau_1 \nabla P_1(F))_x\!=\! (i_{\Gamma}dd_JF_x,0,0,0)
\end{math}
and therefore the only condition to prolong a second order solution into a
third order solution is given by the equation $(i_{\Gamma}dd_{J}F)_{x}=0$ as
Proposition \ref{sec:first-integr-cond} stated.  \hfill \rule{1ex}{1ex}

\bigskip

\begin{proposition}
  \label{thm:P_1_symb_inv}
  The symbol of $P_1=(P_S, P_C)$ is involutive.
\end{proposition}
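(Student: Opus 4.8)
The route I would take is Cartan's test applied to the spray-adapted basis $\mathcal B=\{h_1,\dots,h_n,v_1,\dots,v_n\}$ of \eqref{eq:basis_1}, with $h_n=S$, $v_i=Jh_i$, $v_n=C$. By definition the symbol of $P_1$ is involutive at $x\in\TM$ as soon as some ordering of $\mathcal B$ is quasi-regular, i.e.
\[
  \dim g_{3,x}(P_1)=\dim g_{2,x}(P_1)+\sum_{j=1}^{2n}\dim g_{2,x}(P_1)_{e_1\dots e_j},
\]
the left-hand side being already known from \eqref{eq:6}. So the first step is to pin down $g_2(P_1)=\Ker\sigma_2(P_1)$ and its dimension.

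From the formulas for $\sigma_2(P_S)$ and $\sigma_2(P_C)$ recalled above, a symmetric form $A\in S^2T^*$ belongs to $g_2(P_1)$ exactly when $A(X,C)=0$ for all $X\in T$ — which also annihilates the $A(X,C)$ term sitting inside $\sigma_2(P_S)$, leaving $A(S,JX)=0$ — and $A(S,v)=0$ for every $v\in VTM$. In the component notation \eqref{eq:B_notation}, the first condition kills all components carrying a $C=v_n$ entry and the second kills the components $A_{n\u j}$, $j=1,\dots,n$; counting the independent relations (there is one overlap, $A_{n\u n}$) gives $\dim g_2(P_1)=\dim S^2T^*-(3n-1)=2n^2-2n+1$. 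Next, for a suitable ordering of $\mathcal B$ — one expects the horizontal vectors to come first and $C$ to come last — I would compute $\dim g_2(P_1)_{e_1\dots e_j}$ recursively: imposing $i_{e_l}A=0$ forces to zero precisely those still-surviving components of $A\in g_2(P_1)$ whose index multiset contains the label $e_l$, so the decrease at the $l$-th step is read off from the combinatorics of the admissible multisets. Summing the decreases and comparing with $\dim g_3(P_1)-\dim g_2(P_1)$ is then a finite computation.

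The main obstacle will be this last comparison: one has to exhibit an ordering of $\mathcal B$ for which the successive decreases add up to $\dim g_3(P_1)-\dim g_2(P_1)$ exactly, not merely within Cartan's inequality. The difficulty is structural and comes from the mixed order of $P_1$: since $P_S$ is second order and $P_C$ first order, $g_2(P_1)$ is carved out of $S^2T^*$ by the second-order symbol of $P_S$ together with the first prolongation of the first-order symbol of $P_C$, so it is an irregular-looking subspace, and the special roles of $S\,(=h_n)$ and $C\,(=Jh_n)$ have to be exploited carefully when placing them in the flag. If a direct quasi-regular ordering is not transparent, the same conclusion can be reached by checking cohomologically that the Spencer groups of $g_2(P_1)$ vanish in the relevant bidegrees (equivalent, by Goldschmidt's theorem, to the existence of a quasi-regular basis), or by running the test on the once-prolonged symbol $g_3(P_1)$, whose dimension is already available from the Lemma. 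In any case, since all the symbol spaces are built pointwise and uniformly from $J$, $S$ and $C$, a quasi-regular flag found at one point propagates to every point of $\TM$, which yields the involutivity of the symbol of $P_1$.
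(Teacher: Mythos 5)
Your setup is the same as the paper's (Cartan's test for a quasi-regular basis, with $\dim g_2(P_1)=2n^2-2n+1=n^2+(n-1)^2$ computed correctly and $\dim g_3(P_1)$ taken from the Lemma), but the heart of the proof --- actually exhibiting a quasi-regular flag and verifying the dimension identity --- is missing, and the specific flag you expect to work does not. No ordering of the adapted basis $\mathcal B=\{h_1,\dots,h_n,v_1,\dots,v_n\}$ itself is quasi-regular. For the ordering you suggest (horizontals first, $C$ last) one finds
\begin{equation*}
  \dim g_2(P_1)+\sum_{j=1}^{2n}\dim g_2(P_1)_{e_1\dots e_j}
  =\dim g_3(P_1)+\frac{n(n-1)}{2},
\end{equation*}
so Cartan's inequality is strict for $n\geq 2$. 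The case $n=2$ already shows the failure is not a matter of ordering: after contracting with $h_1$ the surviving components of $A\in g_2(P_1)$ are $A_{22}=A(S,S)$ and $A_{\underline{1}\,\underline{1}}=A(v_1,v_1)$, and no single vector of $\mathcal B$ annihilates both, while starting the flag with $S$, $v_1$ or $C$ makes the first term already too large. The paper's proof gets the exact count only by tilting the basis: it replaces $h_n=S$ by $e_n=h_n+v_1+\dots+v_n$, a vector that is neither horizontal nor vertical, and this modified $\mathcal E$ is what turns out to be quasi-regular. That construction is the key idea of the proof and is absent from your proposal.

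Your fallback remarks (compute the Spencer cohomology of $g_2(P_1)$, or run the test on the prolonged symbol) are legitimate alternative strategies in principle, but they are not carried out either, and they would involve at least as much combinatorics as the direct flag computation. As written, the proposal is a correct plan of attack with an incorrect guess at the decisive step, so it does not yet constitute a proof.
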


\begin{proof}
  Let us consider the basis $\mathcal B$ introduced in (\ref{eq:basis_1}).
  Using the notation \eqref{eq:B_notation} we have
  \begin{alignat*}{1}
    g_2(P_1) & =\mathrm{Ker}\, \sigma_2(P_1) =\left\{A\in S^{2}T^{*}|A(X,C)=0,
      \ A(S,JX)=A(X,C)\right\}
    \\
    & =\left\{A\in S^{2}T^{*}| \ A_{ij}\!=\!A_{ji}, \ A_{\u{ij}}\!=\!A_{\u{ji}},
      \quad A_{i \u n}\!=\!A_{n \u i}\!=\!A_{\u{in}}\!=\!A_{\u{ni}}\!=\!0 \right\}
  \end{alignat*}
  and therefore 
  \begin{equation}
    \dim (g_2(P_1))=\frac{n(n+1)}{2}+(n-1)^{2}+\frac{n(n-1)}{2} = n^2+(n-1)^{2}.
  \end{equation}
  Let us consider now the basis $\mathcal E=\{e_i\}_{i=1\dots 2n}$, where
  \begin{equation}
    \label{eq:7}
    \mathcal E    =\Big\{
    \underbrace{h_1}_{e_1}, \dots,\underbrace{h_{n-1}}_{e_{n-1}},
    \underbrace{h_n\!+\!v_1\!+\!\dots\!+\!v_n}_{e_n},
    \underbrace{v_1}_{e_{n+1}},\dots,
    \underbrace{v_n}_{e_{2n}}\Big\}. 
  \end{equation}
  Denoting the coefficients of $A\in S^2T^*$ with respect to $\mathcal E$ by
  $\widetilde A_{ij}$, we have
  \begin{align*}
	g_2&(P_1)_{e_1 \dots e_k}=\left\{A\!\in\! S^{2}T^{*}| i_{e_1}A =0,\dots
      ,i_{e_k}A=0 \right\}
    \\
    & =\left\{A\!\in\! S^{2}T^{*}| \ \tilde A_{ij}\!=\!\tilde A_{ji}, \ \tilde
      A_{\u{ij}}\!=\! \tilde A_{\u{ji}}, \ \tilde A_{i \u n}\!=\!  \tilde A_{n
        \u i}\!=\! \tilde A_{\u{in}}\!=\! \tilde A_{\u{ni}}\!=\!0, \ \tilde
      A_{lj}\!=\!0, \ \tilde A_{l\u{j}}\!=\!0, \ l\leq k \right\}
  \end{align*}
  therefore
  \begin{align*}
	\dim (g_2(P_1))_{e_1\dots e_k}&=
    \left\{ \begin{array}{ll}
        \frac{(n-k)(n-k+1)}{2}+(n-k)(n-1)+\frac{(n-2)(n-1)}{2}, &\textrm{if} \
        k\leq n, \vspace{5pt}
        \\
        \frac{(n-2-k)(n-k-1)}{2}, & \textrm{if} \ k>n,
      \end{array} \right. \vspace{20pt}
  \end{align*}
  and hence 
  \begin{equation*}
    \begin{aligned}
      \dim & g_2(P_1)+\!\!\sum_{k=1}^{2n} \dim g_2 (P_1)_{e_1\dots e_k}
      =n^2\!+\!(n\!-\!1)^{2}
      +\!\!\sum^{n}_{k=1}\Big(\frac{(n\!-\!k)(n\!-\!k\!+\!1)}{2}
      \!+\!(n\!-\!k)(n\!-\!1)\! \Big)
      \\
      & \ +\! \frac{n (n\!-\!2)(n\!-\!1)}{2}
      +\sum^{n}_{k=1}\frac{(n\!-\!2\!-\!k)(n\!-\!k\!-\!1)}{2}
      =\frac{8n^{3}\!-\!9n^{2}\!+\!7n}{6} \ \stackrel{\eqref{eq:6}}{=} \ \dim
      g_3(P_1),
    \end{aligned}
  \end{equation*}
  which shows that the basis \eqref{eq:7} is quasi-regular, and the symbol of
  $P_1$ is involutive.
\end{proof}

\bigskip

\begin{remark}
  Proposition \ref{sec:first-integr-cond} and \ref{thm:P_1_symb_inv} shows
  that the conditions of Theorem \ref{thm:cartan_kahler} are fulfilled if and
  only if for any initial data $j_{2}(F)_x$ of $P_1$ we have also $i_\Gamma
  dd_JF=0$. This is true if $\dim M = 1$. However, when $\dim M \geq 2$, this
  condition does not satisfied by every second order solution, that is not
  every second order solution can be lifted into a third order solution. Since
  the set of initial data is to large (containing some which cannot be
  prolongated into a higher order solution) we have to reduce it by adding the
  compatibility condition into the system. This leads us to consider the
  operator
  \begin{math}
    (P_S,P_C, P_{\Gamma})
  \end{math}
  where $P_\Gamma$ is a second order operator defined as
  \begin{align*}
    P_{\Gamma}:C^{\infty}(TM)\rightarrow \mathrm{Sec}(\Lambda^{2}T^{*}_{v}),
    \qquad P_{\Gamma}F:=i_{\Gamma}dd_JF.
  \end{align*}
\end{remark}

\begin{remark}
  If $S$ is a spray and $F$ is a 1-homogeneous Lagrangian, then we have
  \begin{displaymath}
    P_SF(X)=  i_S dd_JF(X)=dd_JF(S, hX) =\tfrac{1}{2}i_{\Gamma}dd_JF(S,hX)
    =P_\Gamma F(S,hX)
  \end{displaymath}
  for every $X\in T$. Consequently, if $F$ is a solution of $P_\Gamma$ then it
  is also a solution of $P_S$, that is $P_\Gamma$ contains in particular the
  equations of $P_S$. That lead us to drop from the system $P_S$ and consider
  the \emph{extended Rapcsák} system as:
  \begin{equation}
    \label{eq:P_2}
    P_2=(P_\Gamma,  P_C).
  \end{equation}
  It is clear that a function is a solution to the Rapcsák system if and only
  if it is a solution of the extended Rapcsák system.
\end{remark}

\bigskip

\section{Integrability condition of the extended Rapcsák system}
\label{sec:ext_rapcsak}

In this chapter we investigate the integrability of the extended Rapcsák
system $P_2\!=\!(P_{\Gamma}, P_C)$.  Our method is similar to the one we used in
Chapter \ref{sec:rapcsak}.
\begin{proposition}
  \label{thm:second-integr-cond}
  A 2\ts{nd} order solution $s=j_{2}(F)_x$ of the system $P_2=(P_\Gamma, P_C)$
  at $x\in \TM$ can be prolongated into a 3\ts{rd} order solution, if and only
  if $(i_Rdd_JF)_x=0$.
\end{proposition}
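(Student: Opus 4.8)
The plan is to mirror the structure of the proof of Proposition \ref{sec:first-integr-cond}: compute the symbol $\sigma_2(P_2)$ and its third-order prolongation $\sigma_3(P_2)$, produce an explicit map $\tau_2$ whose kernel is $\Im\sigma_3(P_2)$ (so that $\mathrm{Coker}\,\sigma_3(P_2)\cong\Im\tau_2$), and then evaluate the obstruction map $\varphi(s)=\bigl(\tau_2\,\nabla P_2(F)\bigr)_x$ on a second-order solution via Remark \ref{sec:phi_compute}, showing that after the dust settles the only surviving component is $(i_Rdd_JF)_x$. First I would record the symbols: since $P_\Gamma F=i_\Gamma dd_JF$ and $\Gamma=h-v$, the second-order symbol is $(\sigma_2(P_\Gamma)A)(X,Y)=A(hX,JY)-A(JX,hY)+A(JY,hX)-A(hY,JX)$ acting into $\Lambda^2_v T^*$ (it is built from the alternation of $\sigma_2(P_S)$-type terms over $h$ and $v$ arguments), together with $\sigma_2(P_C)$ as in the previous section. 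Prolonging to $S^3T^*$ and intersecting kernels, the computation of $\dim g_3(P_2)$ is a bookkeeping exercise in the basis \eqref{eq:basis_1} exactly parallel to the count leading to \eqref{eq:6}; this fixes $\mathrm{rank}\,\sigma_3(P_2)$ and hence the dimension of the obstruction space, which I then match against $\dim\Ker\tau_2$ for a suitably chosen $\tau_2$ (again located by identifying the pivot variables of its defining linear equations, as in the proof of the Lemma).

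The heart of the argument is the evaluation of $\varphi(s)$. Writing $\eta:=i_\Gamma dd_JF$ and using $\eta_x=0$ together with $(\L_CF-F)_x=0$ and $\bigl(\nabla(\L_CF-F)\bigr)_x=0$ from the definition of a second-order solution of $P_2$, each component of $\tau_2\nabla P_2(F)$ becomes a first-order derivative of $\eta$ or of the homogeneity defect, and I would simplify it by the same Frölicher–Nijenhuis calculus used before: $di_\Gamma=-i_\Gamma d+d_\Gamma$, the bracket identities $[J,\Gamma]$, $[h,h]=2R$, $[J,C]=J$, $d_{[K,L]}=[d_K,d_L]$, and $d_Jd_J=0$. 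The components coming from $P_C$ and from the "mixed" tensorial pieces should cancel identically, just as $\tau^2_S,\tau^1_{SC},\tau^2_{SC}$ did in Proposition \ref{sec:first-integr-cond}; the single nontrivial component should collapse, via $i_Jd\,i_h = \tfrac12 i_{[h,h]} + (\text{lower order})$ modulo terms that vanish on a solution, to $(i_{[h,h]}dd_JF)_x=2\,(i_Rdd_JF)_x$. Concretely I expect the key identity to be of the form $i_\Gamma d(i_\Gamma dd_JF)=c\,i_Rdd_JF+(\text{terms in }i_\Gamma dd_JF\text{ and its derivatives that die at }x)$, obtained by expanding $\Gamma=h-v$, using $dd_J=d_Jd$ up to sign on semi-basic forms, and recognising the Nijenhuis torsion $R=\tfrac12[h,h]$.

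The main obstacle will be precisely that last identification — organising the Frölicher–Nijenhuis bracket computation so that the curvature $R$ emerges cleanly rather than as a tangle of $d_h$, $d_v$ and substitution operators, and in particular keeping careful track of which terms are genuinely zero pointwise because $s$ is a second-order solution (so one is allowed to use $i_\Gamma dd_JF=0$ and $\nabla$ of the $P_C$-equation at $x$) versus which vanish identically by an algebraic bracket identity. A secondary, purely computational hurdle is the dimension count for $\dim g_3(P_2)$ and $\dim\Ker\tau_2$: because $P_\Gamma$ maps into $\Lambda^2_vT^*$ rather than $T^*$, the symbol has a different rank than $\sigma_3(P_S)$, and the pivot-variable analysis for $\tau_2$ must be redone from scratch; but this is routine linear algebra of the same flavour as \eqref{eq:1}–\eqref{eq:2}, so I anticipate no conceptual difficulty there.
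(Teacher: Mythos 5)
Your proposal is correct and follows essentially the same route as the paper: the authors build $\tau_2=(\tau^{1}_{\Gamma},\tau^{2}_{\Gamma},\tau_{\Gamma C})$ by exactly the rank-versus-pivot count you describe, the components tied to $P_C$ and the mixed pieces vanish identically (via $[J,h]=0$, $d_Jd_J=0$ and $[C,\Gamma]=0$), and the surviving component collapses as $d_h(i_{\Gamma}dd_JF)=2d_hd_hd_JF=d_{[h,h]}d_JF$, which is $i_Rdd_JF$ up to a constant factor --- so the curvature emerges from $R=\tfrac{1}{2}[h,h]$ precisely as you anticipate. The only cosmetic deviation is that the first-order operator applied to $i_{\Gamma}dd_JF$ turns out to be the horizontal cyclic sum (i.e.\ $d_h$, via $\tau^{1}_{\Gamma}$) rather than your guessed $i_{\Gamma}d$, a choice that is fixed automatically once $\tau_2$ is pinned down by the dimension count.
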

\noindent
\emph{Proof.} The symbol of the operator $P_{\Gamma}$ and its first
prolongation are
\begin{alignat*}{2}
  \sigma_{2}(P_{\Gamma})\colon & S^2T^* \rightarrow \Lambda^{2}T^{*}_{v},
  \quad & \big(\sigma_2(P_\Gamma)A_2\big)(Y,Z) &=
  2\big(A_2(hY,JZ)\!-\!A_2(hZ,JY)\big),
  \\
  \sigma_{3}(P_{\Gamma})\colon & S^3T^* \rightarrow T^{*}\!\!\otimes\!
  \Lambda^{2}T^{*}_{v}, \ \ & \big(\sigma_3(P_\Gamma)A_3\big)(X,Y,Z) &=
  2\big(A_3(X,hY,JZ)\!-\!A_3(X,hZ,JY)\big),
\end{alignat*}
where $X,Y,Z \!\in\! T$, $A_2\!\in\! S^{2}T^{*}$, $A_3\!\in\!  S^3T^{*}$. Let
us consider the map
\begin{align} 
  \label{tau2}
  \tau_2 := (\tau^{1}_{\Gamma}, \tau^{2}_{\Gamma},\, \tau_{\Gamma C})
\end{align}
defined on $(T^{*}\!\!\otimes \! \Lambda^{2}T^{*}_{v})\!\times S^2T^{*}$ with
\begin{alignat}{2}
  \label{eq19}
  &\tau^{1}_{\Gamma}(B_{\Gamma},B_C)(X,Y,Z)&&=B_{\Gamma}(hX, Y, Z) +
  B_{\Gamma}(hY, Z, X) +B_{\Gamma}(hZ, X, Y),
  \\
  \label{eq20}
  &\tau^{2}_{\Gamma} (B_{\Gamma},B_C)(X,Y,Z)&&=B_{\Gamma}(JX, Y, Z)
  +B_{\Gamma}(JY, Z, X) +B_{\Gamma}(JZ, X, Y),
  \\
  \label{eq:21}
  &\tau_{\Gamma C} (B_{\Gamma},B_C)(X,Y)&&=\tfrac{1}{2}B_\Gamma
  (C,X,Y)-B_C(hX,JY)+B_C(hY,JX),
\end{alignat}
where $B_{\Gamma}\!\in\! T^{*} \!\!\otimes \! \Lambda^{2}T^{*}_{v}$, $B_C
\!\in\!  S^2T^{*}$, $X,Y,Z \!\in\! T$.  We have the following

\medskip

\begin{lemma}
  \label{sec:lemma_2} 
  Let $K_2$ be the image of $\tau_2$. Then the sequence
  \begin{align}
    \label{exact2}
    S^{3}T^{*}\xrightarrow{ \ \sigma_3(P_2) \ } (T^{*}\!\otimes
    \Lambda^{2}T^{*}_{v})\times S^2T^*\xrightarrow{ \ \tau_2 \ }
    K_2 \longrightarrow 0
  \end{align}
  is exact.  
\end{lemma}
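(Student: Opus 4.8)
The plan is to verify exactness of \eqref{exact2} at the middle term $(T^{*}\!\otimes \Lambda^{2}T^{*}_{v})\times S^2T^*$ by the same two-step strategy used in the proof of Lemma for $\tau_1$: first show the composition $\tau_2\circ\sigma_3(P_2)$ vanishes, giving $\Im\sigma_3(P_2)\subset\Ker\tau_2$, and then show that the two subspaces have the same dimension. The inclusion is a routine substitution computation: one plugs the explicit formula for $\sigma_3(P_2)A_3=(\sigma_3(P_\Gamma)A_3,\sigma_3(P_C)A_3)$ into each of $\tau^1_\Gamma$, $\tau^2_\Gamma$ and $\tau_{\Gamma C}$ and uses the symmetry of $A_3\in S^3T^*$ together with $h^2=h$, $Jh=J$, $JC=0$ and $hC=0$. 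The cyclic sums defining $\tau^1_\Gamma$ and $\tau^2_\Gamma$ are designed to kill the semi-basic two-form $A_3(X,h\cdot,J\cdot)$ after symmetrization, and $\tau_{\Gamma C}$ is the term that relates the $\Gamma$-part and the $C$-part.

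For the dimension count, I would first compute $\dim\Ker\sigma_3(P_2)=\dim g_3(P_2)$ using the adapted basis $\mathcal B=\{h_1,\dots,h_n,v_1,\dots,v_n\}$ of \eqref{eq:basis_1} with $h_n=S$, $v_n=C$, and the index notation \eqref{eq:B_notation}. A symmetric tensor $A\in S^3T^*$ lies in $\Ker\sigma_3(P_\Gamma)$ iff $A(X,hY,JZ)$ is symmetric in $Y,Z$, which in components says $A_{i\,j\,\u k}=A_{i\,k\,\u j}$ for all $i$ (with the understanding that a horizontal slot carrying index $n$ is $S$ and a vertical slot carrying index $n$ is $C$), and $A\in\Ker\sigma_3(P_C)$ iff $A_{ij\u n}=A_{i\u j\u n}=A_{\u i\u j\u n}=0$ as in \eqref{eq:symb_1}. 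Counting the independent equations among these, exactly as in the $\tau_1$ computation, yields $\rank\sigma_3(P_2)$ and hence $\dim\Im\sigma_3(P_2)$. Separately I would compute $\dim\Ker\tau_2$ by identifying the pivot variables of the three systems $\tau^1_\Gamma=0$, $\tau^2_\Gamma=0$, $\tau_{\Gamma C}=0$ among the components of $(B_\Gamma,B_C)\in(T^*\otimes\Lambda^2T^*_v)\times S^2T^*$; since $B_\Gamma$ is a one-form valued in semi-basic two-forms, its components $B_\Gamma(e_a,v_i,v_j)$ are already antisymmetric in $i,j$, and the cyclic relations $\tau^1_\Gamma,\tau^2_\Gamma$ impose the Bianchi-type constraints whose independent count is standard. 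Comparing $\dim\Ker\tau_2$ with $\dim(T^*\otimes\Lambda^2T^*_v)+\dim S^2T^*-\rank\sigma_3(P_2)$ and finding them equal finishes the proof, since together with the inclusion $\Im\sigma_3(P_2)\subset\Ker\tau_2$ it forces equality, and surjectivity of $\tau_2$ onto $K_2=\Im\tau_2$ is automatic by definition of $K_2$.

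The main obstacle I expect is the bookkeeping in the dimension counts: one must be careful about which cyclic identities in \eqref{eq19}--\eqref{eq20} are automatically satisfied because $B_\Gamma$ is already semi-basic and antisymmetric in its last two arguments, and which genuinely cut down the dimension, in order not to over- or under-count. In particular the slots involving $S=h_n$ and $C=v_n$ interact with the $P_C$-equations \eqref{eq:symb_1} and with $\tau_{\Gamma C}$, so the treatment of the index value $n$ requires special attention, just as several equations in \eqref{eq:symb_1_1}--\eqref{eq:symb_1_4} turned out to be trivially satisfied when $j=n$. Once the correct list of independent equations is established, the arithmetic reduces both sides to the common value $\frac{8n^{3}-9n^{2}+7n}{6}$ for the kernels (equivalently a common rank for the symbol map), and exactness follows.
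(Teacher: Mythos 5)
Your overall strategy is exactly the one the paper uses: verify $\tau_2\circ\sigma_3(P_2)=0$ to get the inclusion $\Im \sigma_3(P_2)\subset\Ker \tau_2$, then force equality by a dimension count in the adapted basis \eqref{eq:basis_1}, with $\Ker\sigma_3(P_2)$ cut out by the $P_C$-conditions \eqref{eq:symb_1} together with the symmetry of $A(X,hY,JZ)$ in $Y,Z$ (i.e.\ \eqref{eq:317_318}), and $\Ker\tau_2$ computed by identifying pivot components. However, the comparison you propose to make in the last step is mis-stated, and as written that step would fail. Given the inclusion, exactness at the middle term requires
\begin{displaymath}
  \dim\Ker\tau_2 \;=\; \dim \Im \sigma_3(P_2)\;=\;\rank\sigma_3(P_2),
\end{displaymath}
whereas you propose to check $\dim\Ker\tau_2=\dim(T^{*}\!\otimes\Lambda^{2}T^{*}_{v})+\dim S^2T^{*}-\rank\sigma_3(P_2)$. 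Your identity would amount to $\rank\tau_2=\rank\sigma_3(P_2)$, which is not the exactness condition; numerically its right-hand side equals $\frac{n(n-1)(2n-1)}{6}$ while $\dim\Ker\tau_2=\frac{4n^{3}+9n^{2}+5n}{6}$, so ``finding them equal'' is impossible.

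Relatedly, the common value you announce at the end, $\frac{8n^{3}-9n^{2}+7n}{6}$, is $\dim g_3(P_1)$ from the previous lemma and plays no role here. For $P_2$ the correct figures are: all four blocks of $A$ become totally symmetric and the components carrying a vertical index $n$ vanish, so $\dim g_3(P_2)=\frac{n(n+1)(n+2)}{6}+3\cdot\frac{(n-1)n(n+1)}{6}=\frac{4n^{3}+3n^{2}-n}{6}$ and hence $\rank\sigma_3(P_2)=\dim S^{3}T^{*}-\dim g_3(P_2)=\frac{4n^{3}+9n^{2}+5n}{6}$; on the other side the pivots $B_{\Gamma}(h_i,h_j,h_k)$ and $B_{\Gamma}(v_i,h_j,h_k)$ with $i<j<k$ for the two cyclic equations, plus $B_{\Gamma}(v_n,h_i,h_j)$ with $i<j$ for $\tau_{\Gamma C}=0$, give $\dim\Ker\tau_2=\dim S^2T^{*}+\dim(T^{*}\!\otimes\Lambda^{2}T^{*}_{v})-2\binom{n}{3}-\frac{n(n-1)}{2}=\frac{4n^{3}+9n^{2}+5n}{6}$. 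With the comparison stated correctly ($\dim\Ker\tau_2$ against $\rank\sigma_3(P_2)$ directly) and these numbers in place, your argument closes exactly as in the paper.
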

\begin{proof}
  A simple computation shows that $\sigma_3(P_2) \circ \tau_2 = 0$, and
  therefore $\Im \sigma_3(P_2)\!\subset\! \Ker \tau_2$. Let us compute the
  rank of $\sigma_3(P_2)$.  By using the basis \eqref{eq:basis_1} and the
  notation \eqref{eq:B_notation}, a symmetric tensor $A\in S^{3}T^{*}$ is an
  element of $\mathrm{Ker}\,\sigma_3(P_2)$ if in addition of the relations
  describing the symmetry properties, the equations \eqref{eq:symb_1} and the
  equations
  \begin{equation}
    \label{eq:317_318}
	A_{ij \u k}=A_{ik \u j}, \qquad
    A_{\u i j \u k}=A_{\u i k \u j}, \qquad i,j,k=1,\dots ,n,
  \end{equation}
  hold.  We obtain from \eqref{eq:317_318} that all of the blocks
  \eqref{eq:B_notation} are totally symmetric, and
  $A_{ij\u{n}}=A_{i\u{jn}}=A_{\u{ijn}}=0$. That way there are
  $\frac{n(n+1)(n+2)}{6}$ free components in the block $A_{ijk}$ and
  $\frac{(n-1)n(n+1)}{6}$ free components to choose in each of the blocks
  $A_{ij\u{k}}$, $A_{i\u{jk}}$ and $A_{\u{ijk}}$.  That is
  \begin{equation}
    \label{eq:g_3_P_2}
    \dim(g_3(P_2))=\frac{n(n+1)(n+2)}{6} + 3\frac{(n-1)n(n+1)}{6}
    =\frac{4n^{3}+3n^{2}-n}{6},
  \end{equation}
  and 
  \begin{equation}
    \label{eq:4}
    \rank \sigma_3(P_2) = \dim S^{3}T^{*} -\dim(g_3(P_2))
    =\frac{4n^{3}+9n^{2}+5n}{6}.
  \end{equation}
  On the other hand, considering the equations of $\mathrm{Ker}\,\tau_2$, we
  can find that the pivot terms for $\tau^{1}_{\Gamma}\!=\!0$ and for
  $\tau^{2}_{\Gamma}\!=\!0$ are $B_{\Gamma}(h_i,h_j,h_k)$ and
  $B_{\Gamma}(v_i,h_j,h_k)$, $i\!<\!j\!<\!k$, respectively. Therefore each of
  them give $\binom{n}{3}$ independent equations.  Furthermore
  $B_{\Gamma}(v_n,h_i,h_j)$, $i<j$, $i,j\!=\!1\dots n$, are pivot terms for
  $\tau_{\Gamma C}=0$ which gives $\frac{n(n-1)}{2}$ independent equations.
  Hence
  \begin{equation}
    \label{eq:8}
	\dim \mathrm{Ker} \, \tau_2=\dim S^2T^* \!\! + \dim(T^{*}\! \otimes
    \Lambda^{2}T^{*}_{v})\!-\!2\binom{n}{3}
    \!-\!\frac{n(n\!-\!1)}{2}=\frac{4n^{3}\!+\!9n^{2}\!+\!5n}{6}.
  \end{equation}
  Comparing the dimensions \eqref{eq:4} and \eqref{eq:8} one can find that
  \begin{math}
    \rank \sigma_3(P_2)= \dim \Ker \, \tau_2
  \end{math}
  and the sequence (\ref{exact2}) is exact.
\end{proof}

\bigskip 

\noindent
Let us turn our attention to the proof of Proposition
\ref{thm:second-integr-cond}. We have the following commutative diagram:
\begin{displaymath}
  \begin{CD}
    g_3(P_2)@>>> S^{3}T^{*} @>\sigma_3(P_2)>> (T^{*}\!\otimes \!
    \Lambda^{2}T^{*}_{v})\!\times S^2T^{*}@>\tau_2>> K_2 \longrightarrow 0
    \\
    @VVV @VV \epsilon V @VV\epsilon V
    \\
    Sol_3(P_2) @> i >> J_3 (\RTM) @>p_3(P_2)>> J_1(\Lambda^{2}T^{*}_{v})
    \! \times\! J_2 (\RTM)
    \\
    @VV \overline{\pi}_2V @VV \pi_2 V @VVV
    \\
    Sol_2(P_2) @> i >> J_2(\RTM) @>p_2(P_2)>> \Lambda^{2}T^{*}_{v}\!\times\! J_1
    (\RTM)
  \end{CD}
\end{displaymath}
Let $s\!=\!j_{2}(F)_x$ be a second order solution of $P_2$ at a point $x$,
that is $(P_2F)_x=0$. We have
\begin{equation}
  \label{eq:P_2_x}
  (i_\Gamma dd_JF)_x\!=\!0, \qquad (\L_CF\!-\!F)_x\!=\!0, \qquad 
  \big(\nabla(\L_CF\!-\!F)\big)_x\!=\!0.
\end{equation}
The integrability condition can be computed with the help of the map $\tau_2$
(see Proposition \ref{sec:phi} and \ref{sec:phi_compute}). Indeed,
$s\!\in\!Sol_{2,x}(P_2)$ can be prolongated into a third order solution if and
only if $\varphi(s)=0$, where
\begin{math}
  \varphi(s)=(\tau_2 \nabla P_2(F))_x.
\end{math}
Let us introduce the notation $\Omega=dd_JF$.  Using the component maps of
$\tau_2$ introduced in \eqref{tau2} one can find
\begin{enumerate}
\item
  \begin{math}
    \begin{aligned}[t]
      \displaystyle \tau^{1}_{\Gamma}(\nabla(P_{2}F))_x&
      =d_{h}(i_{\Gamma}dd_JF)_x
      =(d_hi_{2h-I}dd_JF)_x =(2(d_hi_hdd_JF-d_hdd_JF))_x=
      \\
      &=(2d_h(i_hd-di_h)d_JF)_x=(2d_hd_hd_JF)_x\frac{}{}
      =(d_Rd_JF)_x=(i_R\Omega)_x.
    \end{aligned}
  \end{math}
\item 
  \begin{math}
    \begin{aligned}[t]
      \tau^{2}_{\Gamma} (\nabla & (P_{2}F))_x= d_{J}(i_{\Gamma} \Omega)_x
      \!\stackrel{\eqref{eq:h_v}}{=}\!
      (d_J(i_{2h-I}\Omega))_x\!=\!(2d_Ji_hdd_JF-2d_Jdd_JF)_x
      \\
      &\!=\!(-2d_Ji_hd_JdF \!-\!2i_Jddd_JF\!+\!2di_Jdd_JF)_x
      \!=\!-(2d_J(d_Ji_hdF+d_JdF))_x\!=\!0
    \end{aligned}
    \medskip
  \end{math}
  \\
  where we used $[d,d_J]=0$, $[i_h,d_J]=d_{Jh}-i_{[h,J]}$ and $[J,h]=0$.
\item
  \begin{math}
    \begin{aligned}[t]
      \tau_{\Gamma C}(\nabla & P_2(F))_x(X,Y)
      =\tfrac{1}{2}\nabla i_{\Gamma}\Omega(C,X,Y)-\nabla P_CF(hX,JY)+\nabla
      P_CF(hY,JX)
      \\
      &=\tfrac{1}{2}d_Ci_{\Gamma}\Omega(hX,hY)
      -\tfrac{1}{2}i_{\Gamma}d_Cdd_JF(hX,hY)
      =\tfrac{1}{2}d_{[C,\Gamma]}\Omega(hX,hY) \stackrel{[C,\Gamma]=0}{=}0.
    \end{aligned}
  \end{math}
\end{enumerate}
The above computation shows that 
\begin{math}
  \varphi(s)\!=\!\tau_2(\nabla P_2(F))_x\!=\!(i_R\Omega_x,0,0)
\end{math}
which proves Proposition \ref{thm:second-integr-cond}.  \hfill \rule{1ex}{1ex}

\bigskip

\begin{proposition}
\label{lemma_2}
  The symbol of $P_2$ is involutive.
\end{proposition}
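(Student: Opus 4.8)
The plan is to mimic the proof of Proposition \ref{thm:P_1_symb_inv}: exhibit at every point $x \in \TM$ a quasi-regular basis of $T_x$ and verify the dimension identity defining quasi-regularity, which by definition makes the symbol $\sigma_2(P_2)$ involutive. We already know $\dim g_3(P_2) = \frac{4n^3+3n^2-n}{6}$ from \eqref{eq:g_3_P_2} in Lemma \ref{sec:lemma_2}, so the target number on the right-hand side of the quasi-regularity formula is fixed in advance; the task is to choose a basis so that the sum $\dim g_2(P_2) + \sum_{k=1}^{2n} \dim g_2(P_2)_{e_1 \dots e_k}$ hits it.

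First I would compute $g_2(P_2) = \Ker \sigma_2(P_2) \cap \Ker \sigma_2(P_C)$ explicitly in the adapted basis $\mathcal B = \{h_1,\dots,h_n,v_1,\dots,v_n\}$ of \eqref{eq:basis_1}. Using the notation \eqref{eq:B_notation}, the condition $\sigma_2(P_\Gamma)A = 0$ forces the mixed ``horizontal–vertical'' block to be symmetric, $A_{i\u j} = A_{j \u i}$, while $\sigma_2(P_C)A = 0$ kills the entries involving $v_n = C$, namely $A_{i\u n} = A_{\u i \u n} = 0$ (and via symmetry of the mixed block also $A_{n \u i}=0$). Counting the free components in the purely horizontal block $A_{ij}$, the now-symmetric mixed block $A_{i\u j}$, and the purely vertical block $A_{\u i\u j}$ gives $\dim g_2(P_2) = \frac{n(n+1)}{2} + \frac{n(n-1)}{2} + \frac{n(n-1)}{2} = \frac{3n^2-n}{2}$.

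Next I would pick the basis $\mathcal E = \{e_i\}$ in the same spirit as \eqref{eq:7} — keeping $h_1,\dots,h_{n-1}$ first, then a combination playing the role of $S$ (so that the $P_C$-type condition, which singles out $C$, is detected at the right step), then $v_1,\dots,v_n$ — and compute $\dim g_2(P_2)_{e_1\dots e_k}$ as a function of $k$ by successively imposing $i_{e_1}A = \dots = i_{e_k}A = 0$. For $k \le n$ each new horizontal contraction removes one row from the horizontal and from the mixed blocks; for $k > n$ the vertical contractions take over. I would then sum these dimensions over $k=1,\dots,2n$, add $\dim g_2(P_2)$, and check the total equals $\frac{4n^3+3n^2-n}{6} = \dim g_3(P_2)$, exactly as in Proposition \ref{thm:P_1_symb_inv}. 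This last arithmetic identity — matching a cubic in $n$ obtained as a sum of quadratics — is the main obstacle: one must get the piecewise formula for $\dim g_2(P_2)_{e_1\dots e_k}$ right (including the off-by-one effects coming from the $C$- and $S$-related constraints at $k=n$), since a single miscount anywhere breaks the final equality. Once the identity holds, the chosen basis is quasi-regular at every $x$, hence the symbol of $P_2$ is involutive, which completes the proof.
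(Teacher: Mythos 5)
Your framework is the correct one and your computation of $g_2(P_2)$ is right (it agrees with the paper's $\dim g_2(P_2)=\frac{n(n+1)}{2}+2\frac{(n-1)n}{2}=\frac{3n^2-n}{2}$), but the basis you propose to use --- $h_1,\dots,h_{n-1}$ first, then an $S$-type combination, then $v_1,\dots,v_n$, i.e.\ the basis \eqref{eq:7} that worked for $P_1$ --- is \emph{not} quasi-regular for $P_2$ when $n\geq 3$, so the dimension identity you plan to verify fails for that basis. The problem is precisely the feature you describe yourself: for $k\leq n-1$ a purely horizontal contraction $i_{h_l}A=0$ only removes rows of the horizontal and mixed blocks and never touches the purely vertical block $A_{\u{ij}}$, which for $g_2(P_2)$ carries $\frac{n(n-1)}{2}$ free components repeated unchanged in every term of the sum. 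Quantitatively, $\dim (g_2(P_2))_{h_1\dots h_k}=\frac{(n-k)(n-k+1)}{2}+\frac{(n-k)(n-k-1)}{2}+\frac{n(n-1)}{2}$, which exceeds the value needed for quasi-regularity by $\frac{k(k-1)}{2}$; since $\dim g_3\leq \dim g_2+\sum_j\dim (g_2)_{e_1\dots e_j}$ holds for \emph{every} basis, with equality exactly at the quasi-regular ones, any strict excess is fatal. Concretely, for $n=3$ your basis gives $7+4+1=12$, whereas the required value is $\dim g_3(P_2)-\dim g_2(P_2)=22-12=10$.

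The missing idea --- and the actual content of the paper's proof --- is to tilt the horizontal vectors into the vertical directions with \emph{distinct} coefficients: $\widehat e_i=h_i+i\,v_i$ for $i=1,\dots,n-1$, $\widehat e_n=h_n+v_1+\dots+v_n$, $\widehat e_{n+i}=v_i$. Because $\widehat A_{i\u j}-\widehat A_{j\u i}=(i-j)A_{\u{ij}}$ (using the symmetry $A_{i\u j}=A_{j\u i}$ of the mixed block coming from $\sigma_2(P_\Gamma)$), the vertical block becomes expressible through the new mixed components, so each contraction $i_{\widehat e_l}A=0$ also cuts down the vertical block; this yields $\dim (g_2(P_2))_{\widehat e_1\dots\widehat e_k}=\frac{(n-k+1)(n-k)}{2}+(n-1)(n-k)$ for $k\leq n$ and $0$ afterwards, and the sum then matches \eqref{eq:g_3_P_2}. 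Note that the failure of your basis does not threaten the proposition itself, since involutivity only requires \emph{some} quasi-regular basis to exist; but as written your proof would stall at the arithmetic check, and no recounting with that basis will repair it --- the basis must be changed.
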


\begin{proof} 
  We consider the basis \eqref{eq:basis_1} and use the notation
  \eqref{eq:B_notation}.  We have
  \begin{alignat*}{1}
    g_2(P_2) =\mathrm{Ker}\, \sigma_2(P_1) & = \left\{A\in
      S^{2}T^{*}|A(X,C)=0, \ A(hX,JY)=A(hY,JX)\right\}
    \\
    & =\left\{A\in S^{2}T^{*}| \ A_{ij}\!=\!A_{ji}, \
      A_{i\u{j}}\!=\!A_{j\u{i}},\ A_{\u{ij}}\!=\!A_{\u{ji}}, \ A_{i \u
        n}\!=\!0, \ A_{n \u i}\!=\!0 \right\}.
  \end{alignat*}
  Therefore
  \begin{math}
    \dim (g_2(P_2)) =\frac{n(n+1)}{2}+2\frac{(n-1)n}{2}.
  \end{math}
  Let us consider the basis $\widehat {\mathcal E}=\{\widehat
  e_i\}_{i=1}^{2n}$, where
  \begin{alignat*}{2}
    \widehat{e}_i&=h_i+iv_i,& &i=1,\dots, n-1,
    \\
    \widehat{e}_n&=h_n+v_1+\dots+v_n, \quad & &
    \\
    \widehat{e}_{i+n}&=v_i,& &i=1,\dots, n.
  \end{alignat*}
  In the new basis the components of the block $\hat{A}_{\u{ij}}=A_{\u{ij}}$
  can be expressed as a combination of the components $\hat{A}_{i\u j}$ as
  follows: when $i\!\neq\! j$, then 
  \begin{math}
    \hat A_{\u{ij}}\!=\!\frac{1}{i\!-\!j}(\hat{A}_{i \u j}\!-\!\hat{A}_{j
      \u i}),
  \end{math}
  and if $i\!=\! j$ we have
  \begin{math}
    \hat A_{\u{jj}}\!=\!\hat{A}_{n\u j}\!-\!\sum_{k\neq j}
    \frac{1}{k\!-\!j}(\hat{A}_{j\u k}\!-\!\hat{A}_{k \u j}).
  \end{math}
  Then,
  \begin{align*}
	\dim (g_2(P_2))_{\widehat{e}_1 \dots \widehat{e}_k}&= \left\{
      \begin{array}{lll} 
        \frac{(n-k+1)(n-k)}{2} +(n-1)(n-k), \quad &
        \textrm{if} \ k\leq n,
        \\
        \ 0, & \textrm{if} \ k>n,
      \end{array} \right.
  \end{align*}
  and therefore
  \begin{alignat*}{1}
    \dim & (g_2(P_2))+\sum_{k=1}^{2n} \dim
    (g_2(P_2))_{\widehat{e}_1\dots\widehat{e}_k}=
    \\
    &=\frac{n(n\!+\!1)}{2} \!+\!2\frac{(n\!-\!1)n}{2}
    \!+\!\sum^{n}_{k=1}\left(\frac{(n\!-\!k\!+\!1)(n\!-\!k)}{2}+(n\!-\!1)(n\!-\!k)
    \right) \stackrel{\eqref{eq:g_3_P_2}}{=}\dim(g_3(P_2)),
  \end{alignat*}
  and the basis $\widehat {\mathcal E}$ is quasi-regular.
\end{proof}

\bigskip

\noindent

From Proposition \ref{thm:second-integr-cond} and \ref{lemma_2}, using the
Cartan--K{\"a}hler theorem, we get that the integrability condition of the
extended Rapcsák equation can be given in terms of the curvature tensor and we
have the following

\begin{theorem} 
  Let $S$ be a spray on a manifold $M$. If \vspace{-5pt}
  \begin{enumerate}
  \item $\dim M=2$, \vspace{-5pt}
  \item the spray $S$ is flat, \vspace{-5pt}
  \item the spray $S$ is of isotropic curvature, \vspace{-5pt}
  \end{enumerate}
  then the extended Rapcsák equation is formally integrable.
\end{theorem}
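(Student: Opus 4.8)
The plan is to reduce the statement to the Cartan--K\"ahler criterion already prepared for $P_2$. By Proposition \ref{lemma_2} the symbol of $P_2=(P_\Gamma,P_C)$ is involutive, so by Theorem \ref{thm:cartan_kahler} formal integrability follows once we know that $P_2$ is regular and that $\overline{\pi}_2\colon Sol_3(P_2)\to Sol_2(P_2)$ is surjective. By Proposition \ref{thm:second-integr-cond} surjectivity of $\overline{\pi}_2$ is equivalent to the vanishing, at every point and for every second order solution $F$, of the curvature obstruction $i_Rdd_JF$; regularity is then automatic, since involutivity makes $g_3(P_2)$ locally constant in dimension and the identical vanishing of the obstruction makes $p_3(P_2)$ of locally constant rank over $Sol_2(P_2)$. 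Thus the whole theorem reduces to proving, in each of the three cases, that $i_Rdd_JF=0$ on the solutions of $P_2$.

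Before splitting into cases I would record two elementary identities for $\Omega:=dd_JF$. First, $i_J\Omega=0$ for any $F$: since $d_Jd_J=\tfrac12 d_{[J,J]}=0$ and $i_Jd_JF=i_Ji_JdF=0$, one gets $i_Jdd_JF=d_Jd_JF+d\,i_Jd_JF=0$. In particular $\Omega$ vanishes on pairs of vertical vectors, and since $R$ is semi-basic this shows that $i_R\Omega$ is a \emph{semi-basic} $3$-form. Second, if $F$ is $1$-homogeneous (as it is for any solution of $P_C$), then $i_C\Omega=0$: by Cartan's formula $i_Cdd_JF=\mathcal L_Cd_JF-d\bigl(d_JF(C)\bigr)$, where $d_JF(C)=dF(JC)=0$, while $\mathcal L_Cd_JF=d_Cd_JF=d_J\mathcal L_CF-d_JF=0$ because $[d_C,d_J]=d_{[C,J]}=-d_J$ and $\mathcal L_CF=F$.

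For $\dim M=2$ there is then nothing more to do: a semi-basic $3$-form on $TM$ vanishes when $\dim M=2$, so $i_Rdd_JF=0$ automatically and $\overline{\pi}_2$ is onto. For the other two cases I would use the structure of $R$. A flat spray is the case $\alpha=0$ of an isotropic one, $\Phi=\lambda J-\alpha\otimes C$ with $\lambda\in C^\infty(\TM)$, $\alpha\in\Lambda^1_v(\TM)$, so it suffices to treat isotropic sprays. Starting from $R=\tfrac13[J,\Phi]$ and expanding the Fr\"olicher--Nijenhuis bracket (using $[J,J]=0$, $i_JJ=0$, and the corresponding identity for $[J,\alpha\otimes C]$) one obtains
\[
  R=\mu\wedge J-\nu\otimes C,\qquad
  \mu:=\tfrac13(d_J\lambda+\alpha)\in\Lambda^1_v(\TM),\quad
  \nu:=\tfrac13 d_J\alpha\in\Lambda^2_v(\TM),
\]
with $\nu=0$ in the flat case. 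Then the insertion identities $i_{\omega\wedge J}=\omega\wedge i_J$ and $i_{\omega\otimes C}=\omega\wedge i_C$ for scalar forms $\omega$ give
\[
  i_Rdd_JF=i_{\mu\wedge J}\Omega-i_{\nu\otimes C}\Omega=\mu\wedge i_J\Omega-\nu\wedge i_C\Omega=0,
\]
using $i_J\Omega=0$ and $i_C\Omega=0$ from the previous paragraph (in the flat case only $i_J\Omega=0$ is needed). Hence $\overline{\pi}_2$ is surjective in all three cases, and Cartan--K\"ahler yields formal integrability of $P_2$.

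The only nontrivial step is the Fr\"olicher--Nijenhuis computation producing the decomposition $R=\mu\wedge J-\nu\otimes C$ for an isotropic spray (and, in passing, checking the two insertion identities); everything else is bookkeeping with the already proved involutivity of the symbol and with the one-line identities $i_Jdd_JF=0$ and $i_Cdd_JF=0$. The $\dim M=2$ case, by contrast, is immediate from the semi-basicness of the obstruction, and consistently with the three cases the Weyl curvature of $S$ vanishes.
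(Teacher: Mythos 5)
Your proposal is correct and follows essentially the same route as the paper: Cartan--K\"ahler with the involutive symbol (Proposition \ref{lemma_2}) together with the vanishing of the curvature obstruction $i_Rdd_JF$ from Proposition \ref{thm:second-integr-cond}, the latter checked case by case via the semi-basic structure of $R$ and the identities $i_Jdd_JF=0$ and $i_Cdd_JF=0$. The only organizational difference is that you subsume the flat case into the isotropic one by deriving $R=\mu\wedge J-\nu\otimes C$ from $R=\tfrac13[J,\Phi]$, whereas the paper treats the two cases separately; this changes nothing essential.
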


\begin{proof}
  To prove the formal integrability one has to show that
  \begin{math}
    \overline{\pi}_l\colon Sol_{l+1}(P)\to Sol_{l}(P)
  \end{math}
  maps are surjective for any $l\geq 2$. Let $s=j_{2}(F)_x$ be a second order
  solution of the system $(P_\Gamma, P_C)$ at $x\in \mathcal{T}M$. According
  to Proposition \ref{thm:second-integr-cond} it can be prolongated into a
  3\ts{rd} order solution if and only if $(i_Rdd_JF)_x=0$ holds.
  \begin{enumerate}
  \item If $\dim M=2$, then the space of semi-basic 3-forms is trivial, that
    is $\Lambda^3_v(TM)=\{0\}$. Therefore $i_Rdd_JF=0$.
  \item If $S$ is flat, that is $\Phi=\lambda J$, then $R=d_{J} \lambda \wedge
    J$. Using the integrability of the vertical distribution we get:
    \begin{math}
      i_{R}dd_{J}F=d_{R}d_{J}F
      =d_{J}\lambda d^{2}_{J}F+d_{d_{J}\lambda}\wedge i_{J}d_{J}F=0.
    \end{math}
  \item If $S$ is of isotropic curvature, then $R$ takes the form
    $R\!=\!\alpha \wedge J + \beta \otimes C$, where $\alpha \!\in\!
    \Lambda^1_v(TM)$, $\beta \!\in\!  \Lambda^2_v(TM)$. Then
    \begin{displaymath}
      i_Rdd_JF=i_{\alpha\wedge J+\beta\otimes C}dd_JF
      = \alpha\wedge i_Jdd_JF+\beta \wedge i_Cdd_JF=0.
    \end{displaymath}
  \end{enumerate}
  The above computation shows that in the above cases all 2\ts{nd} order
  solutions can be prolongated into a 3\ts{rd} order solution. Moreover, as
  Theorem \ref{sec:lemma_2} shows, the symbol is involutive. Therefore,
  according Cartan--K{\"a}hler theorem, the operator $P_2$ is formally
  integrable.
\end{proof}

\begin{corollary} 
  Let $S$ be a analytic spray on an analytic manifold $M$. If $M$ is
  2-dimensional, or the spray $S$ is flat, resp.~ of isotropic curvature, then
  $S$ is locally projectively Finsler metrizable.
\end{corollary}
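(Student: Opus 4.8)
The plan is to derive the statement from the preceding Theorem by means of the analytic Cartan--K\"ahler existence theorem, and then to read off a Finsler function from a suitably chosen local solution of $P_2$. Since $S$ is analytic, the induced connection $\Gamma=[J,S]$ is analytic, so the operators $P_\Gamma$ and $P_C$, and hence $P_2=(P_\Gamma,P_C)$, have analytic coefficients. In each of the three listed situations the preceding Theorem tells us that $P_2$ is formally integrable; consequently, by the existence statement recalled in Section~\ref{sec:prelim} (cf.~\cite{BCGGG}, p.~397), through every $s\in Sol_{2,v}(P_2)$, $v\in\TM$, there passes a convergent analytic solution $F$ of $P_2$ defined on a neighbourhood of $v$ in $\TM$.

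Any such $F$ is $1$-homogeneous (it solves $P_C$) and satisfies $i_\Gamma dd_JF=0$; by the Remark containing~\eqref{eq:P_2} it then also satisfies $i_Sdd_JF=0$, so $F$ is a solution of the Rapcs\'ak system. By Proposition~\ref{thm:proj_met} it only remains to ensure that the Hessian $\partial^2 F^2/\partial y^i\partial y^j$ is positive definite, and since this is an open condition it suffices to have it at one point $v$. I would therefore first pick $v\in\TM$ and a $2$-jet $s\in Sol_{2,v}(P_2)$ at which this Hessian is positive definite, and then apply the existence theorem to $s$.

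To produce such an $s$, recall that $Sol_{2,v}(P_2)$ is cut out of $J_2(\RTM)_v$ by the pointwise linear relations appearing in the proofs of Propositions~\ref{thm:second-integr-cond} and \ref{lemma_2}: the homogeneity relation $y^iF_{\u i}=F$ together with its first derivatives at $v$ --- among them the Euler relation $y^iF_{\u{ij}}=0$ on the vertical Hessian --- and the condition $i_\Gamma dd_JF=0$ at $v$, which couples only the antisymmetric part of the mixed second derivatives to the vertical Hessian. None of these relations constrains the vertical Hessian beyond the Euler relation, which holds automatically for the vertical Hessian of any $1$-homogeneous function. Hence I would take the $0$-jet, the $y$-derivatives and the $y$-Hessian of $F$ at $v$ to be those of the norm function $\sqrt{g(y,y)}$ of some Riemannian metric $g$ near $\pi(v)$ (so that $\partial^2 F^2/\partial y^i\partial y^j$ equals twice the matrix of $g$ at $v$, in particular positive definite), and then adjust the remaining horizontal and mixed second-order components of the $2$-jet so as to fulfil the other linear relations; this yields the desired $s\in Sol_{2,v}(P_2)$. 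Extending $s$ to an analytic solution $F$ on a neighbourhood $V$ of $v$, shrinking $V$ so that the Hessian stays positive definite, and passing by $1$-homogeneity to the cone over $V$, we obtain a local Finsler function that is $1$-homogeneous and satisfies $i_Sdd_JF=0$; Proposition~\ref{thm:proj_met} then gives projective Finsler metrizability of $S$ there. As $v$ was arbitrary, $S$ is locally projectively Finsler metrizable.

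The step I expect to be the main obstacle is precisely this choice of initial data: one must verify that the pointwise linear constraints imposed at $v$ by $P_\Gamma$ and $P_C$ are compatible with the strict convexity built into the notion of a Finsler function, i.e.\ that the linear space $Sol_{2,v}(P_2)$ genuinely meets the open cone of positive-definite $2$-jets. This is plausible from the explicit description of $g_2(P_2)$ in the proof of Proposition~\ref{lemma_2} but deserves to be written out carefully. A further, more bookkeeping, point is that a single local solution only furnishes a Finsler function on a conic neighbourhood of one ray, so producing one defined for all tangent directions over a small $U\subset M$ would require patching such solutions (otherwise ``locally'' is to be read in the conic-neighbourhood sense).
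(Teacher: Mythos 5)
Your argument is correct and follows the same route as the paper, whose entire proof is the one-line observation that in the analytic category formal integrability yields solutions for arbitrary initial data. You additionally spell out two points the paper leaves implicit --- choosing a $2$-jet in $Sol_{2,v}(P_2)$ whose vertical data come from a Riemannian norm so that $\partial^2F^2/\partial y^i\partial y^j$ is positive definite (legitimate, since the constraints from $P_C$ and $P_\Gamma$ only impose the Euler relation on the vertical Hessian), and the caveat that the resulting Finsler function lives on a conic neighbourhood in $\TM$ --- both of which are accurate refinements rather than a different method.
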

Indeed, in the analytic context, formal integrability implies the existence of
solutions for all the initial data.

\bigskip 

The integrability condition $i_{R}dd_JF=0$ also appeared in \cite{Bacso-Z}.
It can be shown (c.f.~\cite{CMS_2012}) that this integrability condition is
equivalent to the equation $i_\Phi dd_JF=0$ or $i_W dd_JF=0$, where $\Phi$ is
the Jacobi endomorphism and $W$ is the Weyl tensor associated to $S$.

\end{document}